\theoremstyle{plain}
\newtheorem{theoalph}{Theorem}
\newtheorem{thmalph}[theoalph]{Theorem}
\newtheorem{propalph}[theoalph]{Proposition}
\newtheorem{coralph}[theoalph]{Corollary}
\theoremstyle{definition}
\theoremstyle{remark}
\theoremstyle{plain}
\newtheorem{thmsec}{Theorem}[section]
\newtheorem{thm}[thmsec]{Theorem}
\newtheorem{pro}[thmsec]{Proposition}
\newtheorem{lem}[thmsec]{Lemma}
\newtheorem{cor}[thmsec]{Corollary}
\newtheorem{conj}{Conjecture}
\theoremstyle{definition}
\theoremstyle{remark}
\newtheorem{rem}[thmsec]{Remark}
\def\og{\leavevmode\raise.3ex\hbox{$\scriptscriptstyle\langle\!\langle$~}}
\def\fg{\leavevmode\raise.3ex\hbox{~$\!\scriptscriptstyle\,\rangle\!\rangle$}}
\numberwithin{equation}{section}       
\newcommand{\C}{\mathbb{C}}
\newcommand{\pp}{\mathbb{P}^{2}_{\mathbb{C}}}
\newcommand\Sing{\mathrm{Sing}}
\DeclareMathAlphabet{\mathcalsansmathptm}{OMS}{cmsy}{m}{n} 
\newcommand\F{\mathcal{F}}
\newcommand\omegahesse{\omega_{\scalebox{0.64}{\ensuremath H}}^{4}}
\newcommand\omegahilbertcinq{\omega_{\scalebox{0.64}{\ensuremath H}}^{5}}
\newcommand\omegahilbertsept{\omega_{\scalebox{0.64}{\ensuremath H}}^{7}}
\newcommand\Gunderline{{\mspace{2mu}\underline{\mspace{-2mu}\mathcal{G}\mspace{-2mu}}\mspace{2mu}}}
\newcommand\omegaoverline{{\mspace{2mu}\overline{\mspace{-1.4mu}\omega\mspace{-1.4mu}}\mspace{2mu}}}
\newcommand\Hesse{\mathcal{F}_{\hspace{-0.4mm}\raisebox{-0.2mm}{\tiny{$H$}}}^{4}}
\newcommand\Hilbertcinq{\mathcal{F}_{\hspace{-0.4mm}\raisebox{-0.2mm}{\tiny{$H$}}}^{5}}
\newcommand\Hilbertsept{\mathcal{F}_{\hspace{-0.4mm}\raisebox{-0.2mm}{\tiny{$H$}}}^{7}}
\newcommand\elltext{\scalebox{1.1}{\ensuremath \ell}}
\newcommand\ellindice{\scalebox{0.9}{\ensuremath \ell}}
\begin{document}
\selectlanguage{english}

\title[Convex foliations of degree $5$ on the complex projective plane]{Convex foliations of degree $5$ on the complex projective plane}
\date{\today}

\author{Samir \textsc{Bedrouni}}

\address{Facult\'e de Math\'ematiques, USTHB, BP $32$, El-Alia, $16111$ Bab-Ezzouar, Alger, Alg\'erie}
\email{sbedrouni@usthb.dz}

\author{David \textsc{Mar\'{\i}n}}

\thanks{D. Mar\'{\i}n acknowledges financial support from the Spanish Ministry of Science, Innovation and Universities, through grants MTM2015-66165-P and PGC2018-095998-B-I00 and by the Agency for Management of University and Research Grants of Catalonia through the grant 2017SGR1725.}

\address{Departament de Matem\`{a}tiques, Universitat Aut\`{o}noma de Barcelona, E-08193  Cerdanyola del Vall\`es (Barcelona) Spain\\
Centre de Recerca Matem\`atica, Edifici Cc, Campus de Bellaterra, E-08193  Cerdanyola del Vall\`es (Barcelona) Spain}

\email{davidmp@mat.uab.es}

\keywords{convex foliation, homogeneous foliation, radial singularity, \textsc{Camacho}-\textsc{Sad} index}

\maketitle{}

\begin{abstract}
We show that up to automorphisms of $\pp$ there are $14$ homogeneous convex foliations of degree $5$ on~$\pp.$ We establish some properties of the \textsc{Fermat} foliation $\F_{0}^{d}$ of degree $d\geq2$ and of the \textsc{Hilbert} modular foliation $\Hilbertcinq$ of degree~$5.$ As a consequence, we obtain that every
reduced convex foliation of degree $5$ on $\pp$ is linearly conjugated to one of the two foliations $\F_{0}^{5}$ or $\Hilbertcinq,$ which is a partial answer to a question posed in $2013$ by D.~\textsc{Mar\'{\i}n} and J.V.~\textsc{Pereira}. We end with two conjectures about the \textsc{Camacho-Sad} indices along the line at~infinity at the non radial singularities of the homogeneous convex foliations of degree $d\geq2$ on $\pp.$

\noindent{\it 2010 Mathematics Subject Classification. --- 37F75, 32S65, 32M25.}
\end{abstract}

\section{Introduction and statements of results}
\bigskip

\noindent This article is part of a series of works by the authors \cite{Bed17,BM18,BM19Z} on holomorphic foliations on the complex projective plane. For the definitions and notations used (radial singularities, \textsc{Camacho}-\textsc{Sad} index $\mathrm{CS}(\mathcal{F},\elltext,s),$ homogeneous foliations, etc.) we refer to~\cite[Sections~1~and~2]{BM18}.

\noindent Following \cite{MP13} a foliation on the complex projective plane is said to be \textsl{convex} if its leaves other than straight lines have no inflection points. Notice (see \cite{Per01}) that if $\F$ is a foliation of degree $d\geq 1$ on $\pp,$ then $\F$ cannot have more than $3d$ (distinct) invariant lines. Moreover, if this bound is reached, then $\F$ is necessarily convex; in this case $\F$ is said to be \textsl{reduced convex}.

\noindent To our knowledge the only reduced convex foliations known in the literature are those presented in~\cite[Table~1.1]{MP13}: the \textsc{Fermat} foliation $\F_{0}^{d}$ of degree~$d$, the \textsc{Hesse} pencil $\Hesse$ of degree~$4$, the \textsc{Hilbert} modular foliation $\Hilbertcinq$ of degree~$5$ and the \textsc{Hilbert} modular foliation $\Hilbertsept$ of degree~$7$ defined in affine chart respectively by the $1$-forms
\[
\omegaoverline_{0}^{d}=(x^{d}-x)\mathrm{d}y-(y^{d}-y)\mathrm{d}x,
\]
\[
\omegahesse=(2x^{3}-y^{3}-1)y\mathrm{d}x+(2y^{3}-x^{3}-1)x\mathrm{d}y,
\]
\[
\omegahilbertcinq=(y^2-1)(y^2-(\sqrt{5}-2)^2)(y+\sqrt{5}x)\mathrm{d}x-(x^2-1)(x^2-(\sqrt{5}-2)^2)(x+\sqrt{5}y)\mathrm{d}y,
\]
\[
\omegahilbertsept=(y^3-1)(y^3+7x^3+1)y\mathrm{d}x-(x^3-1)(x^3+7y^3+1)x\mathrm{d}y.
\]
D.~\textsc{Mar\'{\i}n} and J.V.~\textsc{Pereira} \cite[Problem~9.1]{MP13} asked the following question: are there other reduced convex foliations? The answer in degree $2$, resp. $3$, resp. $4$, to this question is negative, thanks to \cite[Proposition~7.4]{FP15}, resp. \cite[Corollary~6.9]{BM18}, resp. \cite[Theorem~B]{BM19Z}. In this paper we show that the answer in degree $5$ to \cite[Problem~9.1]{MP13} is also negative. To do this, we follow the same approach as that described in degree~$4$ in~\cite{BM19Z}. It~mainly consists in using Proposition~3.2 of \cite{BM19Z} which allows to associate to every pair $(\F,\elltext),$ where $\F$ is a reduced convex foliation of degree $d$ on $\pp$ and $\elltext$ an invariant line of $\F,$ a homogeneous convex foliation $\mathcal{H}_{\F}^{\ellindice}$ of degree $d$ on $\pp$ belonging to the \textsc{Zariski} closure of the $\mathrm{Aut}(\pp)$-orbit of $\F,$ and then to study for~$d=5$ the set of foliations $\mathcal{H}_{\F}^{\ellindice}$ where $\elltext$ runs through the invariant lines of $\F.$

\noindent A homogeneous foliation $\mathcal{H}$ of degree $d$ on $\pp$ is given, for a suitable choice of affine coordinates $(x,y),$ by~a homogeneous $1$-form $\omega=A(x,y)\mathrm{d}x+B(x,y)\mathrm{d}y,$ where $A,B$ are complex homogeneous polynomials of degree $d$ with $\gcd(A,B)=1.$ By \cite{BM18} to such a foliation is associated the rational map $\Gunderline_{\mathcal{H}}\hspace{1mm}\colon\mathbb{P}^{1}_{\mathbb{C}}\rightarrow \mathbb{P}^{1}_{\mathbb{C}}$ defined by $$\Gunderline_{\mathcal{H}}([x:y])=[-A(x,y):B(x,y)].$$
Notice (see~\cite{BM18}) that a homogeneous foliation  $\mathcal{H}$ on $\pp$ is convex if and only if its associated map~$\Gunderline_{\mathcal{H}}$ is critically fixed, {\it i.e.} every critical point of $\Gunderline_{\mathcal H}$ is a fixed point  of $\Gunderline_{\mathcal H}$. More precisely, a homogeneous foliation $\mathcal{H}$ of degree $d$ on $\pp$ is convex of type $\mathcal{T}_\mathcal{H}=\sum_{k=1}^{d-1}r_{k}\cdot\mathrm{R}_k$ ({\it i.e.} having $r_{1}$, resp.~$r_{2},\ldots,$ resp.~$r_{d-1}$ radial singularities of order $1$, resp. $2,\ldots,$ resp. $d-1$, the  $\mathrm{R}_k$'s being just symbols) if and only if the map $\Gunderline_{\mathcal{H}}$ possesses  $r_{1}$, resp.~$r_{2},\ldots,$ resp.~$r_{d-1}$ fixed critical points of multiplicity $1$, resp. $2\ldots,$ resp. $d-1,$ with $\sum_{k=1}^{d-1}kr_{k}=2d-2.$

\noindent Using results of \cite[pages~79--80]{CGNPP15} on critically fixed rational maps of degree $5$ from $\mathbb{P}^{1}_{\mathbb{C}}$ to itself and studying the convexity of a homogeneous foliation $\mathcal{H}$ of degree $5$ on $\pp$ according to the shape of its type~$\mathcal{T}_\mathcal{H},$ we~obtain the classification, up to automorphisms of $\pp$, of homogeneous convex foliations of degree~$5$~on~$\pp.$
\begin{thmalph}\label{thmalph:class-homogene-convexe-5}
{\sl Up to automorphisms of  $\pp$ there are $14$ homogeneous convex foliations  $\mathcal{H}_1,\ldots,\mathcal{H}_{14}$ of degree $5$ on the complex projective plane. They are respectively described in affine chart by the following $1$-forms

\begin{enumerate}[]
\item $\omega_1\hspace{1mm}=y^5\mathrm{d}x-x^5\mathrm{d}y$;
\smallskip
\item $\omega_2\hspace{1mm}=y^2(10\hspace{0.2mm}x^3+10x^2y+5xy^2+y^3)\mathrm{d}x-x^4(x+5y)\mathrm{d}y$;
\smallskip
\item $\omega_3\hspace{1mm}=y^3(10\hspace{0.2mm}x^2+5xy+y^2)\mathrm{d}x-x^3(x^2+5xy+10y^2)\mathrm{d}y$;
\smallskip
\item $\omega_4\hspace{1mm}=y^4(5x-3y)\mathrm{d}x+x^4(3x-5y)\mathrm{d}y$;
\smallskip
\item $\omega_5\hspace{1mm}=y^3(5x^2-3y^2)\mathrm{d}x-2x^5\mathrm{d}y$;
\smallskip
\item $\omega_6\hspace{1mm}=y^3(220\hspace{0.2mm}x^2-165xy+36y^2)\mathrm{d}x-121x^5\mathrm{d}y$;
\smallskip
\item $\omega_7\hspace{1mm}=y^4\Big(\big(5-\sqrt{5}\big)x-2y\Big)\mathrm{d}x
                                                       +x^4\Big(\big(7-3\sqrt{5}\big)x-2\big(5-2\sqrt{5}\big)y\Big)\mathrm{d}y$;
\smallskip
\item $\omega_8\hspace{1mm}=y^4\Big(5\big(3-\sqrt{21}\big)x+6y\Big)\mathrm{d}x
                                                       +x^4\Big(3\big(23-5\sqrt{21}\big)x-10\big(9-2\sqrt{21}\big)y\Big)\mathrm{d}y$;
\smallskip
\item $\omega_9\hspace{1mm}=y^3\Big(2\big(5+a\big)x^2-\big(15+a\big)xy+6y^2\Big)\mathrm{d}x
                                                       -x^4\Big(\big(1-a\big)x+2ay\Big)\mathrm{d}y,$
                                                       where $a=\sqrt{\scalebox{0.83}{$5\big(4\sqrt{61}-31\big)$}}$;
\smallskip
\item $\omega_{10}=y^3\Big(2\big(5+\mathrm{i}b\big)x^2-\big(15+\mathrm{i}b\big)xy+6y^2\Big)\mathrm{d}x
                                                -x^4\Big(\big(1-\mathrm{i}b\big)x+2\mathrm{i}by\Big)\mathrm{d}y,$
                                                where $b=\sqrt{\scalebox{0.83}{$5\big(4\sqrt{61}+31\big)$}}$;
\smallskip
\item $\omega_{11}=y^3(5x^2-y^2)\mathrm{d}x+x^3(x^2-5y^2)\mathrm{d}y$;
\smallskip
\item $\omega_{12}=y^3(20\hspace{0.2mm}x^2-5xy-y^2)\mathrm{d}x+x^3(x^2+5xy-20y^2)\mathrm{d}y$;
\smallskip
\item $\omega_{13}=y^2(5x^3-10\hspace{0.2mm}x^2y+10\hspace{0.2mm}xy^2-4y^3)\mathrm{d}x-x^5\mathrm{d}y$;
\smallskip
\item $\omega_{14}=y^3\Big(u(\sigma)x^2+v(\sigma)xy+w(\sigma)y^2\Big)\mathrm{d}x+\sigma\hspace{0.3mm}x^4\Big(2\sigma\big(\sigma^2-\sigma+1\big)x
                                                -\big(\sigma+1\big)\big(3\sigma^2-5\sigma+3\big)y\Big)\mathrm{d}y,$
\smallskip
\item []                                        where $u(\sigma)=(\sigma^2-3\sigma+1)(\sigma^2+5\sigma+1),$\hspace{1.5mm}
                                                      $v(\sigma)=-2(\sigma+1)(\sigma^2-5\sigma+1),$\hspace{1.5mm}
                                                      $w(\sigma)=(\sigma^2-7\sigma+1),$
\item []                                              $\sigma=\rho+\mathrm{i}\sqrt{\frac{1}{6}-\frac{4}{3}\rho-\frac{1}{3}\rho^2}$\,
                                                and   $\rho$ is the unique real number satisfying $8\rho^3-52\rho^2+134\rho-15=0.$
\end{enumerate}
}
\end{thmalph}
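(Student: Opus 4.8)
The plan is to transport the problem, via the dictionary of \cite{BM18}, from foliations to one-dimensional dynamics. To a homogeneous foliation $\mathcal{H}$ of degree $5$ given by $\omega=A\,\mathrm{d}x+B\,\mathrm{d}y$ one attaches the degree-$5$ self-map $\Gunderline_{\mathcal{H}}=[-A:B]$ of $\sph$; two such foliations are linearly conjugate if and only if the corresponding maps are conjugate in $\mathrm{PGL}_2(\mathbb{C})$, and by the criterion recalled above $\mathcal{H}$ is convex of type $\mathcal{T}_\mathcal{H}=\sum_k r_k\,\mathrm{R}_k$ exactly when $\Gunderline_{\mathcal{H}}$ is critically fixed with $r_k$ fixed critical points of multiplicity $k$. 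Classifying the homogeneous convex foliations of degree $5$ up to $\mathrm{Aut}(\pp)$ is therefore the same as classifying, up to conjugacy, the critically fixed rational maps of degree $5$. First I would record the numerical constraints on an admissible type: the relation $\sum_{k=1}^{4}k\,r_k=2\cdot5-2=8$ together with the fact that a degree-$5$ map has only $6$ fixed points, each superattracting fixed point being simple, forces $\sum_k r_k\le 6$; the holomorphic fixed point formula $\sum 1/(1-\lambda)=1$ further excludes the extremal value $\sum_k r_k=6$. Solving these conditions in nonnegative integers leaves only finitely many candidate types.

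Next I would feed these admissible types into the classification of critically fixed degree-$5$ maps of \cite[pages~79--80]{CGNPP15}: for each type it records how many conjugacy classes of maps realize it (possibly none, possibly several), and the total is the announced fourteen. The substance of the proof is then to turn each abstract class into an explicit normal form. For a fixed type I would use the $\mathrm{PGL}_2(\mathbb{C})$-freedom to send three of the distinguished points (fixed critical points, or the remaining non-critical fixed points) to $0$, $1$ and $\infty$, write $\Gunderline_{\mathcal{H}}=[-A:B]$ with undetermined coefficients, and impose that the prescribed points be critical of the prescribed multiplicities. This yields a polynomial system whose solutions, after discarding those with $\gcd(A,B)\neq1$ or with a collapsed critical portrait, produce the one-forms $\omega_1,\dots,\omega_{14}$; the algebraic numbers appearing in cases \texttt{7}--\texttt{10} and \texttt{14} (the nested radicals $\sqrt{5(4\sqrt{61}\mp31)}$ and the parameter $\sigma$ governed by the cubic $8\rho^3-52\rho^2+134\rho-15=0$) are precisely the coordinates of these solutions.

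To finish I would verify, for each explicit $\omega_i$, that $\gcd(A,B)=1$ and that the Jacobian of $\Gunderline_{\mathcal{H}_i}$ vanishes exactly at points fixed by $\Gunderline_{\mathcal{H}_i}$, with the multiplicities prescribed by the claimed type; this is a direct computation with resultants. The remaining task is to check that the list has no repetitions and no omissions. Foliations of different types are inequivalent, and within a single type I would separate the classes by a conjugacy invariant of the configuration of radial singularities along the line at infinity --- for instance their cross-ratios, or the multiset of \textsc{Camacho}-\textsc{Sad} indices --- which also explains why certain \textsc{Galois}-conjugate solutions (such as those in \texttt{7}/\texttt{8} or \texttt{9}/\texttt{10}) must be listed as distinct foliations rather than identified.

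The step I expect to be the main obstacle is the passage from the combinatorial data of \cite{CGNPP15} to the explicit one-forms: the normalized polynomial systems are genuinely nonlinear, their solution sets live in nontrivial number fields, and one must both guarantee exhaustiveness (no solution branch is missed) and correctly decide, among \textsc{Galois}-conjugate solutions, which give conjugate maps and which do not. It is this bookkeeping, rather than any single calculation, that makes the classification delicate.
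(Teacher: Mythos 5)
Your proposal is correct and takes essentially the same route as the paper: the same dictionary between homogeneous convex foliations of degree $5$ and critically fixed rational maps of degree $5$, the same enumeration of admissible types (the paper excludes the two types with six radial singularities by citing \cite[Remark~2.5]{Bed17}, you by the holomorphic fixed-point formula --- the same computation in disguise), the same reliance on \cite[pages~79--80]{CGNPP15}, normal forms obtained by normalizing the fixed critical points and solving the resulting polynomial systems (the paper does exactly this via \cite[Lemma~3.9]{BM18} for the types it does not import from \cite{BM18} or \cite{CGNPP15}), and the same discriminating invariants --- type together with the multiset of \textsc{Camacho}-\textsc{Sad} indices, which is precisely the paper's Table~\ref{tab:CS(lambda)}. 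The only divergence is bookkeeping: the paper cites ready-made normal forms of \cite{CGNPP15} for several types instead of re-deriving them, but this does not change the structure of the argument.
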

\vspace{2mm}

\noindent During the proof of this theorem in \S\ref{sec:Demonstration des principaux resultats} we also obtain the following dual result.

\begin{thmalph}
{\sl Up to conjugation by a \textsc{Möbius} transformation there are $14$ critically fixed rational maps of degree $5$ from the \textsc{Riemann} sphere to itself, namely the maps $\Gunderline_{\mathcal{H}_{\hspace{0.1mm}1}},\ldots,\Gunderline_{\mathcal{H}_{\hspace{0.1mm}14}}.$
}
\end{thmalph}
\smallskip

\noindent To every foliation $\F$ on $\pp$ and to every integer $d\geq2,$ we associate respectively the following two subsets of $\C\setminus\{0,1\}$:
\begin{itemize}
\item $\mathcal{CS}(\F)$ is, by definition, the set of $\lambda\in\C\setminus\{0,1\}$ for which there is a line $\elltext$ invariant by $\F$ and a non-degenerate singular point $s\in\elltext$ of $\F$ such that $\mathrm{CS}(\F,\elltext,s)=\lambda$;

\item $\mathcal{HCS}_d$ is defined as the set of  $\lambda\in\C\setminus\{0,1\}$ for which there exist two homogeneous convex foliations $\mathcal{H}$ and $\mathcal{H}'$ of degree $d$ on $\pp$ having respective singular points $s$ and $s'$ on the line at infinity $\elltext_{\infty}$ such that $\mathrm{CS}(\mathcal H,\elltext_{\infty},s)=\lambda$ and $\mathrm{CS}(\mathcal H',\elltext_{\infty},s')=\frac{1}{\lambda}.$
\end{itemize}
\smallskip

\noindent The following proposition, which will be proved in~\S\ref{sec:Demonstration des principaux resultats}, motivates the introduction of the sets  $\mathcal{CS}(\F)$ and~$\mathcal{HCS}_d.$
\begin{propalph}\label{propalph:CS(F)-CS(d)}
{\sl Let $\F$ be a reduced convex foliation of degree $d\geq2$ on $\pp.$ Then
\begin{itemize}
\item [\textit{(a)}] $\emptyset\neq\mathcal{CS}(\F)\subset\mathcal{HCS}_d;$
\item [\textit{(b)}] $\forall\hspace{1mm}\lambda\in\mathcal{CS}(\F)$,\hspace{1mm}$\frac{1}{\lambda}\in\mathcal{CS}(\F).$
\end{itemize}
}
\end{propalph}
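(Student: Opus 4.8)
The plan is to reduce everything to the local behaviour of $\F$ at a non-degenerate singular point lying on one of its invariant lines, and then to transport this information to the homogeneous setting through the association $\F\rightsquigarrow\mathcal{H}_{\F}^{\ellindice}$ of \cite[Proposition~3.2]{BM19Z}. I would rely on two structural facts about reduced convex foliations, both extracted from the analysis in \cite{BM18}. First, every singular point $s$ of $\F$ on an invariant line $\elltext$ is either radial, in which case $\mathrm{CS}(\F,\elltext,s)=1$, or non-degenerate, in which case $s$ lies on exactly two of the $3d$ invariant lines of $\F$, say $\elltext$ and $\elltext'$, and the two indices are reciprocal: $\mathrm{CS}(\F,\elltext,s)\cdot\mathrm{CS}(\F,\elltext',s)=1$; in particular a non-degenerate index is never $0$ (both eigenvalues being non-zero). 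Second, the construction of $\mathcal{H}_{\F}^{\ellindice}$ carries $\elltext$ to the line at infinity $\elltext_{\infty}$ and preserves \textsc{Camacho}--\textsc{Sad} indices: to a non-degenerate point $s\in\elltext$ of $\F$ there corresponds a singular point of $\mathcal{H}_{\F}^{\ellindice}$ on $\elltext_{\infty}$ with the same index $\mathrm{CS}(\F,\elltext,s)$.

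Granting these, part~\textit{(b)} is immediate. If $\lambda\in\mathcal{CS}(\F)$ is realised at a non-degenerate $s\in\elltext$, then the second invariant line $\elltext'$ through $s$ yields another non-degenerate point (namely $s$ itself, now viewed on $\elltext'$) with $\mathrm{CS}(\F,\elltext',s)=\tfrac1\lambda$; since $\lambda\in\C\setminus\{0,1\}$ is finite, non-zero and $\neq1$, we have $\tfrac1\lambda\in\C\setminus\{0,1\}$, whence $\tfrac1\lambda\in\mathcal{CS}(\F)$. The inclusion $\mathcal{CS}(\F)\subset\mathcal{CS}_d$ then follows by applying the transfer property to both lines: setting $\mathcal H=\mathcal{H}_{\F}^{\ellindice}$ and $\mathcal H'=\mathcal{H}_{\F}^{\ellindice'}$, two homogeneous convex foliations of degree $d$, one gets singular points on $\elltext_{\infty}$ with indices $\lambda$ and $\tfrac1\lambda$ respectively, which is exactly the condition defining $\lambda\in\mathcal{CS}_d$.

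It remains to prove non-emptiness. I would fix any invariant line $\elltext$ of $\F$. Since $\F$ is reduced convex, its $3d$ invariant lines satisfy \textsc{Hirzebruch}'s property, so $\elltext$ carries exactly $d+1\geq3$ singular points, and \textsc{Camacho}--\textsc{Sad} gives $\sum_{s\in\elltext}\mathrm{CS}(\F,\elltext,s)=\elltext\cdot\elltext=1$. Suppose, for contradiction, that $\mathcal{CS}(\F)=\emptyset$. Then every non-degenerate point on $\elltext$ has index in $\{0,1\}$, hence equal to $1$ because such an index is non-zero; together with the first fact this forces all $d+1\geq3$ singular points of $\elltext$ to have index $1$, so $\sum_{s\in\elltext}\mathrm{CS}(\F,\elltext,s)=d+1\geq3$, contradicting the value $1$. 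Hence some point $s\in\elltext$ has index $\neq1$; by the dichotomy above it must be non-degenerate, so its index $\lambda$ is also $\neq0$, giving $\lambda\in\mathcal{CS}(\F)$ and the desired non-emptiness.

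The main obstacle I expect is the transfer property relied on throughout: one must verify that the degeneration defining $\mathcal{H}_{\F}^{\ellindice}$ keeps the node $s$ on the limit of $\elltext$ (that is, on $\elltext_{\infty}$) without colliding with other singularities, and that its \textsc{Camacho}--\textsc{Sad} index is unchanged. The index is constant along the $\mathrm{Aut}(\pp)$-orbit of $\F$ and varies continuously on its \textsc{Zariski} closure, so the real point is the persistence of the singular point in the limit, which is where the precise form of \cite[Proposition~3.2]{BM19Z} must be used. A secondary point to pin down is the value $1$ at radial singularities; I would read it off from the relation $\mathrm{CS}=\tfrac1{1-\mu}$ between the index and the multiplier $\mu$ of the corresponding fixed point of $\Gunderline_{\mathcal{H}}$ (a radial singularity being a fixed critical point, so $\mu=0$), which is moreover consistent with the holomorphic fixed point formula $\sum\tfrac1{1-\mu}=1$ reproducing the \textsc{Camacho}--\textsc{Sad} identity.
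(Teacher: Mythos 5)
Your proposal is correct and follows essentially the same route as the paper: non-emptiness via the \textsc{Camacho}--\textsc{Sad} formula on an invariant line carrying $d+1$ non-degenerate singularities (so no index can be $0$, and not all can be $1$), part \textit{(b)} via the second invariant line through the point and the reciprocity of the two indices, and the inclusion $\mathcal{CS}(\F)\subset\mathcal{CS}_d$ via assertion \textit{(vi)} of Proposition~\ref{prop:F-d�g�n�re-H}, which is exactly the transfer property you flagged as the main point to verify (the paper likewise takes it as given from \cite[Proposition~3.2]{BM19Z}). The only cosmetic difference is that you count the $d+1$ singular points on a line using \textsc{Hirzebruch}'s property of the invariant-line arrangement, whereas the paper cites \cite[Proposition~2.3]{Bru15}.
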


\begin{rem}\label{rem:CS(Hilbertcinq)-CS(Fermat)} In particular, for the foliations $\Hilbertcinq$ and $\F_0^d,$ we have
\begin{itemize}
\item [$\bullet$] $\{-\frac{3}{2}\pm\frac{\sqrt{5}}{2}\}=\mathcal{CS}(\Hilbertcinq)\subset\mathcal{HCS}_5,$ \emph{cf.} \cite[Theorem~2]{MendesP05};
\item [$\bullet$] $\{(1-d)^{\pm 1}\}=\mathcal{CS}(\F_0^d)\subset\mathcal{HCS}_d$ for any $d\geq2,$ \emph{cf.} \cite[Example~6.5]{BM18}.
\end{itemize}
\end{rem}

\noindent The following theorem gives equivalent conditions for a foliation of degree $d\geq2$ on $\pp$ to be conjugated to the \textsc{Fermat} foliation~$\F_{0}^{d}.$
\begin{thmalph}\label{thmalph:Fermat}
{\sl Let $\F$ be a foliation of degree $d\geq2$ on $\pp.$ The following assertions are equivalent:
\begin{itemize}
\item [\textit{(1)}] $\F$ is linearly conjugated to the \textsc{Fermat} foliation $\F_{0}^{d}$;
\item [\textit{(2)}] $\F$ is reduced convex and $\mathcal{CS}(\F)=\{(1-d)^{\pm1}\}$;
\item [\textit{(3)}] $\F$ possesses three radial singularities of maximal order $d-1,$ necessarily non-aligned.
\end{itemize}
}
\end{thmalph}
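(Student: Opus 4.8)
The plan is to prove the cyclic chain $(1)\Rightarrow(2)\Rightarrow(3)\Rightarrow(1)$, using throughout that a linear conjugacy preserves reduced convexity, the order of every radial singularity, the alignment of points, and each Camacho--Sad index. For $(1)\Rightarrow(2)$ I would simply record that $\F_0^d$ carries the $3d$ invariant lines cut out by $xyz\,(x^{d-1}-y^{d-1})(y^{d-1}-z^{d-1})(z^{d-1}-x^{d-1})$, hence is reduced convex, while $\mathcal{CS}(\F_0^d)=\{(1-d)^{\pm1}\}$ is exactly Remark~\ref{rem:CS(Hilbertcinq)-CS(Fermat)}; both properties transport under conjugacy.

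The step $(2)\Rightarrow(3)$ is the combinatorial heart. By reduced convexity and \textsc{Hirzebruch}'s property each of the $3d$ invariant lines meets the others in exactly $d+1$ points, and each such point is either an ordinary double point (two lines, indices $\lambda$ and $\lambda^{-1}$ with $\lambda\neq0,1$) or a radial point of order $k-2$ where $k\geq3$ lines meet (index $1$ along every branch, and $k\leq d+1$ since the order is at most $d-1$). The hypothesis puts every ordinary index in $\{(1-d)^{\pm1}\}$, so the relation $\sum_{s\in\elltext}\mathrm{CS}(\F,\elltext,s)=\elltext\cdot\elltext=1$ becomes on each line a Diophantine equation whose only solutions are: two radial points together with $d-1$ ordinary points of index $(1-d)^{-1}$ (type $\mathrm{II}$), or $d$ radial points together with one ordinary point of index $1-d$ (type $\mathrm{I}$). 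As the reciprocal values $1-d$ and $(1-d)^{-1}$ occupy the two branches at each ordinary point, every ordinary point joins one type-$\mathrm{I}$ line to one type-$\mathrm{II}$ line; counting these from both sides gives $n_{\mathrm{I}}=(d-1)\,n_{\mathrm{II}}$ and $n_{\mathrm{I}}+n_{\mathrm{II}}=3d$, forcing $n_{\mathrm{II}}=3$. On a type-$\mathrm{II}$ line the two radial points $p_1,p_2$ satisfy $(k_{p_1}-1)+(k_{p_2}-1)=3d-1-(d-1)=2d$, and $k_{p_i}\leq d+1$ then forces $k_{p_1}=k_{p_2}=d+1$, i.e. maximal order $d-1$. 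Since two type-$\mathrm{II}$ lines cannot cross at an ordinary point, the three type-$\mathrm{II}$ lines meet pairwise at order-$(d-1)$ radial points; because a line carrying three such points would need $3d>3d-1$ crossings, no line holds more than two, so three of these pairwise intersections are non-aligned, giving $(3)$.

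For $(3)\Rightarrow(1)$ I would move the three non-aligned maximal radial points to $[1:0:0],[0:1:0],[0:0:1]$ and write $\F$ as $\Omega=a\,\mathrm{d}x+b\,\mathrm{d}y+c\,\mathrm{d}z$ with $a,b,c$ homogeneous of degree $d+1$ and $xa+yb+zc=0$. Maximal radiality at $[0:0:1]$ means that in the chart $z=1$ the dual field reads $\lambda_3(x\,\partial_x+y\,\partial_y)+(\text{degree }d)$ with all intermediate jets absent; this and its two cyclic analogues yield the divisibilities $a=yz\,\alpha$, $b=zx\,\beta$, $c=xy\,\gamma$ with $\alpha+\beta+\gamma=0$, the vanishing of intermediate jets confining each of $\alpha,\beta,\gamma$ to $\{x^{d-1},y^{d-1},z^{d-1}\}$, after which the requirement that the three linear parts be \emph{exactly} radial kills one monomial in $\alpha$ and one in $\beta$, leaving $\alpha=\alpha_0 y^{d-1}+c_1 z^{d-1}$, $\beta=\beta_0 x^{d-1}-c_1 z^{d-1}$, $\gamma=-\alpha_0 y^{d-1}-\beta_0 x^{d-1}$ with $\alpha_0,\beta_0,c_1$ all nonzero. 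This is the \textsc{Fermat} shape up to three scalars, and the residual stabiliser of the vertices—the torus $\{\mathrm{diag}(t_1,t_2,t_3)\}/\mathbb{C}^{\ast}$—acts transitively on the locus where these scalars are nonzero (one solves $(t_2/t_3)^{d-1}$ and $(t_1/t_3)^{d-1}$ against the prescribed ratios), so $\F$ is linearly conjugate to $\F_0^d$, closing the cycle. The main obstacle is precisely this last direction: translating the three local maximal-radiality conditions into the divisibilities and pure-power supports above, and then verifying that the surviving two-parameter family is a single torus orbit rather than a genuinely larger family of foliations.
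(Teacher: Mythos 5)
Your cycle $(1)\Rightarrow(2)\Rightarrow(3)\Rightarrow(1)$ has a genuine gap at the step $(3)\Rightarrow(1)$: you begin by ``moving the three \emph{non-aligned} maximal radial points to the coordinate vertices'', i.e.\ you take non-alignment as a hypothesis. But in the statement of the theorem, assertion \textit{(3)} is only ``$\F$ possesses three radial singularities of maximal order $d-1$''; the words ``necessarily non-aligned'' are a \emph{conclusion}, not part of the hypothesis. Indeed the paper stresses that this is exactly the point: the implication $\textit{(3)}\Rightarrow\textit{(1)}$ is a strengthening of \cite[Proposition~6.3]{BM18}, which proved the same statement \emph{with} the extra non-alignment assumption — precisely what you are re-proving. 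The missing ingredient is the paper's Lemma~\ref{lem:3-singularites-radiales-non-alignees}: if $m_1,m_2$ are radial singularities of maximal order $d-1$, then the line $(m_1m_2)$ (which is automatically invariant) carries no third radial singularity; the paper proves this by observing that the top-degree part of a defining $1$-form is $ay^d\mathrm{d}x+bx^d\mathrm{d}y$, so that every singularity of $\F$ on $(m_1m_2)$ other than $m_1,m_2$ has \textsc{Camacho}--\textsc{Sad} index $\tfrac{1}{1-d}\neq1$ and hence cannot be radial. Without this lemma (or some substitute treating the aligned configuration), your chain only establishes the equivalence of \textit{(1)}, \textit{(2)} and the \emph{strengthened} condition ``three non-aligned maximal radial singularities'', and a hypothetical foliation with three aligned maximal radial singularities escapes your argument entirely; note that you cannot recover non-alignment a posteriori from the equivalence, since that would be circular.

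Two further remarks on $(2)\Rightarrow(3)$, which is otherwise the most interesting part of your proposal: it is a genuinely different route from the paper's (you work directly with the \textsc{Hirzebruch} arrangement of the $3d$ invariant lines, whereas the paper passes to the homogeneous degenerations $\mathcal{H}_{\F}^{\ellindice}$ and applies Lemma~\ref{lem:CS(H)=(1-d)pm1} to them), and your type-$\mathrm{I}$/type-$\mathrm{II}$ dichotomy is the same Diophantine consequence of the \textsc{Camacho}--\textsc{Sad} formula as in that lemma, so the idea is sound and arguably more self-contained. However: (i) your final sentence ``the three type-$\mathrm{II}$ lines meet pairwise \dots\ so three of these pairwise intersections are non-aligned'' silently assumes the three type-$\mathrm{II}$ lines are not concurrent; if they all pass through one maximal radial point $p$, there is only one pairwise intersection, and you must instead argue with the three \emph{other} maximal radial points $q_1,q_2,q_3$ on these lines (they are non-aligned because a line through all three would be invariant and carry three maximal radial points, contradicting your crossing count); (ii) for $d=2$ one has $1-d=(1-d)^{-1}=-1$, so types $\mathrm{I}$ and $\mathrm{II}$ coincide and your incidence count $n_{\mathrm{I}}=(d-1)n_{\mathrm{II}}$ degenerates — the case needs a separate (easy) argument; (iii) the incidence fact you use throughout, that a radial singularity of order $\nu$ lies on at most $\nu+2$ invariant lines (equivalently, that $k=d+1$ concurrent invariant lines force maximal order), is true but not free: it requires an argument via the tangency curve of $\F$ with the pencil of lines through the point, or a citation, and should not be folded into the definition of ``radial point of order $k-2$''.
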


\noindent In this theorem, the implication $\textit{(3)}\Rightarrow\textit{(1)}$ is a slight generalization of our previous result \cite[Proposition~6.3]{BM18}, where we had obtained the same conclusion but with the additional hypothesis that the three radial singularities of $\F$ are not aligned.

\begin{coralph}\label{coralph:CS=(1-d)pm1}
{\sl If $\mathcal{HCS}_{d}=\{(1-d)^{\pm1}\}$ then, up to automorphisms of $\pp$, the \textsc{Fermat} foliation $\F_{0}^{d}$ is the unique reduced convex foliation in degree $d.$}
\end{coralph}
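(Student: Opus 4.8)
The plan is to reduce the statement to the two preceding results and to argue entirely at the level of the sets $\mathcal{CS}(\F)$ and $\mathcal{CS}_d$, without revisiting the underlying geometry. Concretely, I would fix an arbitrary reduced convex foliation $\F$ of degree $d\geq 2$ on $\pp$ and aim to show that $\F$ is linearly conjugated to $\F_0^d$; this is exactly the uniqueness asserted by the corollary, together with the fact (recorded in Remark~\ref{rem:CS(Hilbertcinq)-CS(Fermat)}, or read off from Theorem~\ref{thmalph:Fermat}) that $\F_0^d$ is itself reduced convex, so that such a foliation does exist.

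The first step is to pin down $\mathcal{CS}(\F)$ exactly using Proposition~\ref{propalph:CS(F)-CS(d)}. Part~\textit{(a)} gives $\emptyset\neq\mathcal{CS}(\F)\subset\mathcal{CS}_d$, and the hypothesis $\mathcal{CS}_d=\{(1-d)^{\pm1}\}$ then forces $\mathcal{CS}(\F)$ to be a nonempty subset of the two-element set $\{1-d,\,(1-d)^{-1}\}$. Part~\textit{(b)} says that $\mathcal{CS}(\F)$ is stable under the involution $\lambda\mapsto 1/\lambda$, which interchanges $1-d$ and $(1-d)^{-1}$. Since a nonempty subset of $\{1-d,\,(1-d)^{-1}\}$ stable under this involution must contain both elements, I conclude $\mathcal{CS}(\F)=\{(1-d)^{\pm1}\}$. (For $d=2$ the two values coincide and this set is the singleton $\{-1\}$, but the argument is unaffected.)

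The second and final step is to invoke the equivalence $\textit{(2)}\Rightarrow\textit{(1)}$ of Theorem~\ref{thmalph:Fermat}: the foliation $\F$ is reduced convex and satisfies $\mathcal{CS}(\F)=\{(1-d)^{\pm1}\}$, which is precisely assertion~\textit{(2)}, so $\F$ is linearly conjugated to $\F_0^d$. As $\F$ was an arbitrary reduced convex foliation of degree $d$, this shows that $\F_0^d$ is, up to automorphism of $\pp$, the unique reduced convex foliation in degree $d$.

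I do not expect a genuine obstacle here, since all the substantive work is already carried by Proposition~\ref{propalph:CS(F)-CS(d)} and Theorem~\ref{thmalph:Fermat}; the only point requiring a little care is the elementary set-theoretic argument in the first step—ensuring that emptiness is excluded and that inversion-stability really pins the set down to the full pair—together with the harmless degeneration at $d=2$.
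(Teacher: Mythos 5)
Your proposal is correct and takes essentially the same approach the paper intends for this corollary (which it states without an explicit proof): parts \textit{(a)} and \textit{(b)} of Proposition~\ref{propalph:CS(F)-CS(d)} force $\mathcal{CS}(\F)=\{(1-d)^{\pm1}\}$ for every reduced convex foliation $\F$ of degree $d$, and then the implication \textit{(2)}$\Rightarrow$\textit{(1)} of Theorem~\ref{thmalph:Fermat} gives the conjugation to $\F_0^d$. This is exactly the pattern the paper itself deploys in the degree-$5$ case (proof of Theorem~\ref{thmalph:convexe-reduit-5}), and your handling of the degenerate case $d=2$, where the two values $(1-d)^{\pm1}$ coincide, is a harmless and correct refinement.
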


\noindent The following theorem gives equivalent conditions for a foliation of degree $5$ on $\pp$ to be conjugated to the \textsc{Hilbert} modular foliation $\Hilbertcinq.$
\begin{thmalph}\label{thmalph:Hilbert5}
{\sl Let $\F$ be a foliation of degree $5$ on $\pp.$ The following assertions are equivalent:
\begin{itemize}
\item [\textit{(1)}] $\F$ is linearly conjugated to the \textsc{Hilbert} modular foliation $\Hilbertcinq$;
\item [\textit{(2)}] $\F$ is reduced convex and $\mathcal{CS}(\F)=\{-\frac{3}{2}\pm\frac{\sqrt{5}}{2}\}$;
\item [\textit{(3)}] $\F$ possesses three radial singularities $m_1,m_2,m_3$ of order~$3$ (necessarily non-aligned) and two radial singularities of order~$1$ on each invariant line $(m_jm_l),1\leq j<l\leq3.$
\end{itemize}
}
\end{thmalph}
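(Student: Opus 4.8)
The plan is to prove the theorem by establishing a cycle of implications $(1)\Rightarrow(3)\Rightarrow(2)\Rightarrow(1)$, exploiting the machinery already developed in the excerpt (the correspondence between reduced convex foliations and homogeneous convex ones via $\mathcal{H}_{\F}^{\ellindice}$, the classification in Theorem~\ref{thmalph:class-homogene-convexe-5}, and the \textsc{Camacho-Sad} data in Remark~\ref{rem:CS(Hilbertcinq)-CS(Fermat)}).

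\textbf{The implication $(1)\Rightarrow(3)$.} Assuming $\F$ is linearly conjugated to $\Hilbertcinq$, I would verify the singular configuration directly on the defining $1$-form $\omegahilbertcinq$. The three factors $(y^2-1)$, $(y^2-(\sqrt5-2)^2)$ and the symmetric ones in $x$, together with the diagonal-type lines $y+\sqrt5 x$ and $x+\sqrt5 y$, organise the $15$ invariant lines into a configuration with $\textsc{Hirzebruch}$'s property. The radial singularities of order $3$ should appear at three distinguished non-aligned points (the ``centres'' of the modular configuration), and on each of the three lines joining them one finds two order-$1$ radial singularities. This is a finite explicit check: compute $\mathrm{Sing}(\F)$, determine the order of each radial singularity via the linear part (or, equivalently, via the number of separatrices), and confirm the count $3\cdot 3+\text{(radials of order 1)}$ against $\sum_k k\,r_k=2d-2=8$ for each associated homogeneous foliation. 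Since linear conjugation preserves all of this, $(3)$ follows.

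\textbf{The implication $(3)\Rightarrow(2)$.} Here I would argue that the prescribed radial configuration forces $\F$ to be reduced convex with exactly the stated $\mathcal{CS}$-set. The presence of three non-aligned order-$3$ radial singularities, each pair joined by an invariant line carrying two further order-$1$ radial singularities, should pin down enough of the $3d=15$ invariant lines to conclude, via \cite[Lemma~6.8]{BM18} and \cite[Lemma~3.1]{BM19Z}, that the bound $3d$ is attained, hence $\F$ is reduced convex. For the \textsc{Camacho-Sad} values I would compute, at a non-degenerate singularity $s$ on an invariant line $\elltext$, the index $\mathrm{CS}(\F,\elltext,s)$ using the relations among indices along a line (the \textsc{Camacho-Sad} formula $\sum_s \mathrm{CS}(\F,\elltext,s)=\elltext\cdot\elltext=1$ on $\pp$) together with the radial constraints, which rigidify the local eigenvalue ratios. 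The radiality conditions should force the only non-degenerate, non-radial indices to be $-\tfrac32\pm\tfrac{\sqrt5}{2}$, matching Remark~\ref{rem:CS(Hilbertcinq)-CS(Fermat)}; note these two values are reciprocal, consistent with Proposition~\ref{propalph:CS(F)-CS(d)}(b).

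\textbf{The implication $(2)\Rightarrow(1)$.} This is the step I expect to be the main obstacle, and it is where the degree-$5$ classification does the heavy lifting. Given that $\F$ is reduced convex of degree $5$ with $\mathcal{CS}(\F)=\{-\tfrac32\pm\tfrac{\sqrt5}{2}\}$, I would invoke \cite[Proposition~3.2]{BM19Z} to attach to each invariant line $\elltext$ the homogeneous convex foliation $\mathcal{H}_{\F}^{\ellindice}$ of degree $5$ lying in the $\mathrm{Aut}(\pp)$-orbit closure of $\F$. By Theorem~\ref{thmalph:class-homogene-convexe-5} this foliation must be one of $\mathcal{H}_1,\dots,\mathcal{H}_{14}$. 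The constraint $\mathcal{CS}(\F)=\{-\tfrac32\pm\tfrac{\sqrt5}{2}\}$ restricts, via the definition of $\mathcal{CS}_5$ and Proposition~\ref{propalph:CS(F)-CS(d)}(a), which of these fourteen models can occur as $\mathcal{H}_{\F}^{\ellindice}$: I would compute the \textsc{Camacho-Sad} indices along $\elltext_\infty$ for each $\mathcal{H}_i$ and eliminate all those whose non-radial indices are incompatible with the pair $\{-\tfrac32\pm\tfrac{\sqrt5}{2}\}$. This should leave only the model(s) degenerating from $\Hilbertcinq$ (plausibly $\mathcal{H}_{11}$ or $\mathcal{H}_{12}$, given their symmetric shape). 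The genuine difficulty is then the reconstruction: knowing that \emph{all} the degenerations $\mathcal{H}_{\F}^{\ellindice}$ are of this prescribed type as $\elltext$ ranges over the $15$ invariant lines, I must show these rigidly assemble into $\Hilbertcinq$ itself, rather than into some other reduced convex configuration. Following the degree-$4$ template of \cite{BM19Z}, I would recover the full line arrangement of $\F$ from the radial data of its homogeneous degenerations, identify it with the modular arrangement of $\Hilbertcinq$, and conclude by rigidity that $\F$ is linearly conjugate to $\Hilbertcinq$, completing the cycle.
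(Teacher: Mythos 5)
Your cycle $(1)\Rightarrow(3)\Rightarrow(2)\Rightarrow(1)$ runs in the opposite direction to the paper's $(1)\Rightarrow(2)\Rightarrow(3)\Rightarrow(1)$, and this is not a cosmetic difference: both of the implications you have placed the weight on contain genuine gaps. The most serious one is $(3)\Rightarrow(2)$. To show $\F$ is reduced convex you must produce $3d=15$ invariant lines, but from the configuration in $(3)$ the only available tool is the tangency criterion of \cite[Proposition~2.2]{Bru15}: $(mm')$ is invariant when $\tau(\F,m)+\tau(\F,m')>\deg\F=5$. Since an order-$3$ radial point has $\tau=4$ and an order-$1$ radial point has $\tau=2$, this yields the three sides $(m_jm_l)$ and the six lines joining each $m_j$ to the two order-$1$ points on the opposite side --- nine lines in all; for two order-$1$ points on different sides one gets $2+2=4\leq 5$ and the criterion is silent. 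The lemmas you invoke to close this gap cannot do so: \cite[Lemma~6.8]{BM18} and \cite[Lemma~3.1]{BM19Z} state properties \emph{of} reduced convex foliations, not criteria \emph{for} reduced convexity, and the homogeneous degeneration $\mathcal{H}_{\F}^{\ellindice}$ of \cite[Proposition~3.2]{BM19Z} also takes reduced convexity as a hypothesis (it is what guarantees $\gcd(A_d,B_d)=1$), so none of this machinery is available under assumption $(3)$ alone. The paper sidesteps all of this by proving $(3)\Rightarrow(1)$ directly: Lemma~\ref{lem:3-singularites-radiales-d-2} gives a normal form for $\omega$ forced by the three order-$3$ radial points, writing out the order-$1$ radiality of the six remaining points determines all coefficients (up to $w_0=\pm(\sqrt{5}-2)$), and an explicit linear map carries $\omega$ to $\omegahilbertcinq$; assertion $(2)$ then comes for free from known facts about $\Hilbertcinq$ (\cite{MP13}, \cite{MendesP05}).

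The second gap is in your $(2)\Rightarrow(1)$: after correctly restricting the possible models $\mathcal{H}_{\F}^{\ellindice}$ via Table~\ref{tab:CS(lambda)}, you leave the key step as ``recover the line arrangement and conclude by rigidity,'' which is not an argument --- there is no rigidity statement in the paper to invoke. (Also, your guess that the relevant model is $\mathcal{H}_{11}$ or $\mathcal{H}_{12}$ is wrong: $\mathcal{H}_{11}$ has non-radial index $-\frac{3}{2}$ and $\mathcal{H}_{12}$ has $-4$; the only model whose non-radial indices are $-\frac{3}{2}\pm\frac{\sqrt{5}}{2}$, i.e.\ the roots of $\lambda^2+3\lambda+1$, is $\mathcal{H}_7$, of type $2\cdot\mathrm{R}_1+2\cdot\mathrm{R}_3$.) The paper's route through $(2)\Rightarrow(3)$ makes this step cheap: the type $2\cdot\mathrm{R}_1+2\cdot\mathrm{R}_3$ of every degeneration, transferred back to $\F$ by Proposition~\ref{prop:F-d�g�n�re-H}(v), together with the tangency criterion, produces exactly the configuration $(3)$, and the only real computation in the whole proof is then the normal-form identification $(3)\Rightarrow(1)$ described above. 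In short: your toolkit is the right one, but the reconstruction work you defer in $(2)\Rightarrow(1)$ and the convexity claim in $(3)\Rightarrow(2)$ are precisely the places where a proof is needed and none is given.
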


\vspace{-1cm}
\begin{figure}[h]
\begin{center}
\begin{turn}{-18}
\includegraphics[width=7cm]{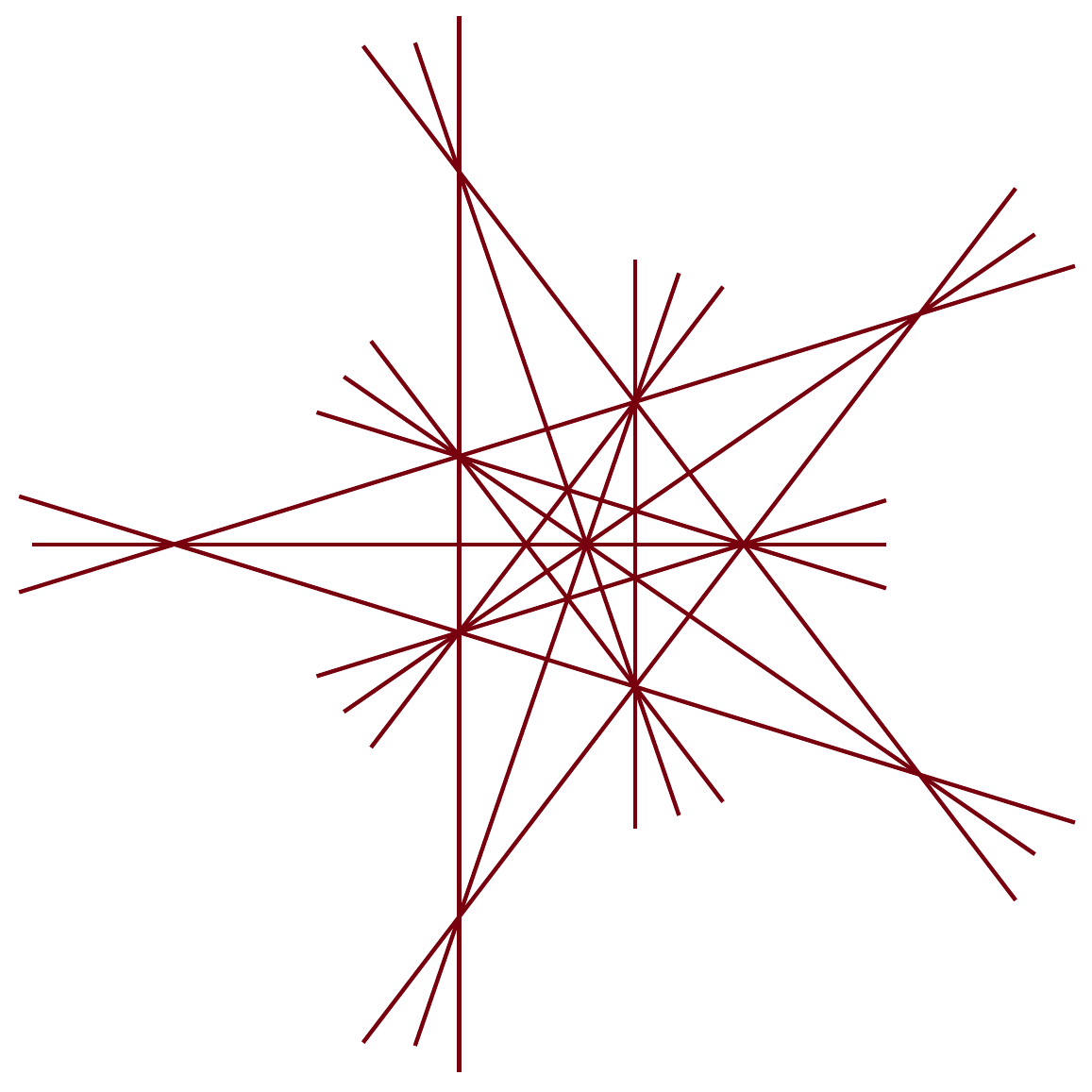}
\end{turn}
\end{center}
\vspace{-1cm}
\caption{Arrangement of invariant lines of the \textsc{Hilbert} modular foliation $\Hilbertcinq$ which possesses $6$ radial singularities of order $3$, $10$ radial singularities of order $1$ and $15$ non-radial singularities with \textsc{Baum-Bott} invariant $-1$. Through each radial singularity of order $k\ge 1$ pass $k+2$ invariant lines.}
\end{figure}

\noindent Using essentially Theorems~\ref{thmalph:class-homogene-convexe-5},~\ref{thmalph:Fermat},~\ref{thmalph:Hilbert5} and Proposition~\ref{propalph:CS(F)-CS(d)}, we establish the following theorem.
\begin{thmalph}\label{thmalph:convexe-reduit-5}
{\sl Up to automorphisms of $\pp$ the \textsc{Fermat} foliation $\F_{0}^{5}$ and the \textsc{Hilbert} modular foliation $\Hilbertcinq$ are the only reduced convex foliations of degree five on $\pp.$}
\end{thmalph}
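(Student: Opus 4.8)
The plan is to reduce the statement to a determination of the invariant $\mathcal{CS}(\F)$ and then to invoke the two characterizations already at our disposal. Let $\F$ be an arbitrary reduced convex foliation of degree $5$ on $\pp$. By the equivalences $\textit{(1)}\Leftrightarrow\textit{(2)}$ of Theorems~\ref{thmalph:Fermat} and~\ref{thmalph:Hilbert5}, together with Remark~\ref{rem:CS(Hilbertcinq)-CS(Fermat)}, it suffices to prove that
\[
\mathcal{CS}(\F)=\{-4,-\tfrac{1}{4}\}
\quad\text{or}\quad
\mathcal{CS}(\F)=\{-\tfrac{3}{2}+\tfrac{\sqrt{5}}{2},\,-\tfrac{3}{2}-\tfrac{\sqrt{5}}{2}\}.
\]
Write $P_1=\{-4,-\tfrac14\}$ and $P_2=\{-\tfrac32\pm\tfrac{\sqrt5}{2}\}$; each $P_i$ is a single orbit of the involution $\lambda\mapsto\tfrac1\lambda$, since $(-4)(-\tfrac14)=1$ and $(-\tfrac32+\tfrac{\sqrt5}{2})(-\tfrac32-\tfrac{\sqrt5}{2})=1$.

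First I would compute the set $\mathcal{CS}_5$ explicitly. By Theorem~\ref{thmalph:class-homogene-convexe-5} every homogeneous convex foliation of degree $5$ is, up to automorphism of $\pp$, one of the fourteen foliations $\mathcal{H}_1,\dots,\mathcal{H}_{14}$, so $\mathcal{CS}_5$ is read off a finite list. For each $\mathcal{H}_j$ I would enumerate the singular points of $\mathcal{H}_j$ on the line at infinity $\elltext_{\infty}$, discard the radial ones, and evaluate $\mathrm{CS}(\mathcal{H}_j,\elltext_{\infty},s)$ at every remaining non-degenerate point $s$ by a direct local computation from the $1$-form $\omega_j$. Collecting the values $\lambda$ obtained this way and retaining those for which $\tfrac1\lambda$ also occurs (possibly for another $\mathcal{H}_{j'}$) produces $\mathcal{CS}_5$. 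I expect this finite but laborious computation to yield exactly $\mathcal{CS}_5=P_1\cup P_2$.

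Next I would apply Proposition~\ref{propalph:CS(F)-CS(d)}: the set $\mathcal{CS}(\F)$ is non-empty, contained in $\mathcal{CS}_5=P_1\cup P_2$, and stable under $\lambda\mapsto\tfrac1\lambda$. As $P_1$ and $P_2$ are precisely the two orbits of this involution, the only non-empty stable subsets of $\mathcal{CS}_5$ are $P_1$, $P_2$ and $P_1\cup P_2$, so everything comes down to excluding the mixed possibility $\mathcal{CS}(\F)=P_1\cup P_2$. To rule it out I would argue that the mere occurrence of the value $-4=1-5$ already forces \textsc{Fermat}-type geometry: via the correspondence $\elltext\mapsto\mathcal{H}_{\F}^{\ellindice}$ and the explicit types of the $\mathcal{H}_j$, the value $-4$ can only arise along an invariant line $\elltext$ whose associated homogeneous foliation forces $\elltext$ to meet a radial singularity of maximal order $4$ of $\F$; I would then show that $\F$ must carry three such (necessarily non-aligned) singularities, so that $\F$ is linearly conjugate to $\F_{0}^{5}$ by $\textit{(3)}\Rightarrow\textit{(1)}$ of Theorem~\ref{thmalph:Fermat}, whence $\mathcal{CS}(\F)=\mathcal{CS}(\F_{0}^{5})=P_1\neq P_1\cup P_2$, a contradiction. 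Symmetrically, the occurrence of a value of $P_2$ would, through the radial-singularity description $\textit{(3)}$ of Theorem~\ref{thmalph:Hilbert5}, force $\F\cong\Hilbertcinq$ and hence $\mathcal{CS}(\F)=P_2$. In both cases the mixed configuration is impossible, so $\mathcal{CS}(\F)\in\{P_1,P_2\}$.

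The conclusion then follows immediately: if $\mathcal{CS}(\F)=P_1$ then $\F$ is linearly conjugate to $\F_{0}^{5}$ by Theorem~\ref{thmalph:Fermat}, and if $\mathcal{CS}(\F)=P_2$ then $\F$ is linearly conjugate to $\Hilbertcinq$ by Theorem~\ref{thmalph:Hilbert5}. The step I anticipate to be the main obstacle is the exclusion of the mixed case: converting a single occurrence of the \textsc{Camacho}-\textsc{Sad} value $-4$ (resp.\ a value of $P_2$) into the global radial-singularity configuration demanded by condition $\textit{(3)}$ of Theorem~\ref{thmalph:Fermat} (resp.\ Theorem~\ref{thmalph:Hilbert5}) is delicate, since it requires controlling how the local homogeneous models $\mathcal{H}_{\F}^{\ellindice}$ assemble across all fifteen invariant lines of the \textsc{Hirzebruch} arrangement of $\F$; the determination of $\mathcal{CS}_5$, while elementary, is the other computationally heavy ingredient.
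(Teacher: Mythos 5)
Your first two steps (computing $\mathcal{CS}_5=\{-4^{\pm1},-\tfrac{3}{2}\pm\tfrac{\sqrt{5}}{2}\}$ from Theorem~\ref{thmalph:class-homogene-convexe-5}, then using both parts of Proposition~\ref{propalph:CS(F)-CS(d)} to reduce to the three cases $P_1$, $P_2$, $P_1\cup P_2$ and settling the pure cases by Theorems~\ref{thmalph:Fermat} and~\ref{thmalph:Hilbert5}) coincide exactly with the paper's proof, but your exclusion of the mixed case -- which you correctly identify as the crux -- has a genuine gap, on two counts. First, your key factual claim is wrong: by Table~\ref{tab:CS(lambda)}, the index $-4$ is attained by the three foliations $\mathcal{H}_{12},\mathcal{H}_{13},\mathcal{H}_{14}$, of types $2\cdot\mathrm{R}_1+3\cdot\mathrm{R}_2$, $4\cdot\mathrm{R}_1+1\cdot\mathrm{R}_4$ and $3\cdot\mathrm{R}_1+1\cdot\mathrm{R}_2+1\cdot\mathrm{R}_3$, and two of these have \emph{no} radial singularity of order $4$; so an occurrence of $-4$ does not force the line to meet an order-$4$ radial point. (The claim is true for the reciprocal value $-\tfrac{1}{4}$, attained only by $\mathcal{H}_1$ of type $2\cdot\mathrm{R}_4$; by part \textit{(b)} of Proposition~\ref{propalph:CS(F)-CS(d)} you should work with $-\tfrac{1}{4}$.) Second, even after this correction, the step ``$\F$ must then carry three order-$4$ radial singularities'' has no argument behind it: the paper's proof that $\mathcal{CS}(\F)=\{-4^{\pm1}\}$ yields that configuration rests on Lemma~\ref{lem:CS(H)=(1-d)pm1}, whose hypothesis -- \emph{every} non-radial index lies in $\{(1-d)^{\pm1}\}$ -- is precisely what fails in the mixed case; and a direct propagation attempt stalls, because a line joining an order-$4$ radial point to a non-radial singularity may well degenerate to type $4\cdot\mathrm{R}_1+1\cdot\mathrm{R}_4$ (again index $-4$), which produces no new order-$4$ point. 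Your symmetric claim for $P_2$ suffers the same defect: the implication \textit{(2)}$\Rightarrow$\textit{(3)} of Theorem~\ref{thmalph:Hilbert5} requires $\mathcal{CS}(\F)=P_2$ exactly, not merely $P_2\subset\mathcal{CS}(\F)$.

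The missing idea, which is how the paper actually kills case (iii), is to combine the two values rather than argue from each separately. From $-\tfrac{1}{4}\in\mathcal{CS}(\F)$ and Table~\ref{tab:CS(lambda)} one gets an invariant line $\elltext$ whose degeneration $\mathcal{H}_{\F}^{\ellindice}$ is of type $2\cdot\mathrm{R}_4$, hence (by \cite[Proposition~3.2]{BM19Z}) a radial singularity $m_1$ of $\F$ of order $4$; from a value of $P_2$ one gets a line $\elltext'$ whose degeneration is of type $2\cdot\mathrm{R}_1+2\cdot\mathrm{R}_3$ (only $\mathcal{H}_7$ realizes these indices), hence a radial singularity $m_2$ of order $3$. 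Since $\tau(\F,m_1)+\tau(\F,m_2)=5+4>5=\deg\F$, the line $\elltext''=(m_1m_2)$ is $\F$-invariant, and its degeneration is a homogeneous convex foliation of degree $5$ with radial points of orders $4$ and $3$, hence of type $1\cdot\mathrm{R}_1+1\cdot\mathrm{R}_3+1\cdot\mathrm{R}_4$. By Table~\ref{tab:CS(lambda)} it then carries a non-radial singularity whose index is a root of $491\lambda^3+982\lambda^2+463\lambda+64$, and none of these roots lies in $\mathcal{CS}_5$; since this index equals a \textsc{Camacho}-\textsc{Sad} index of $\F$ itself, this contradicts $\mathcal{CS}(\F)\subset\mathcal{CS}_5$ (Proposition~\ref{propalph:CS(F)-CS(d)} and Corollary~\ref{cor:CS5}). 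Without an argument of this kind, your proof does not close.
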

\bigskip
\smallskip

\section{Proof of the main results}\label{sec:Demonstration des principaux resultats}
\bigskip

\noindent We need to know the numbers\, $r_{ij}$\, of radial singularities of order $j$ of the homogeneous foliations $\mathcal{H}_{\hspace{0.2mm}i},$~\, $i=1,\ldots,14$, $j=1,\ldots,4,$ and the values of the \textsc{Camacho-Sad} indices $\mathrm{CS}(\mathcal{H}_{\hspace{0.2mm}i},\elltext_{\infty},s)$, $s\in\Sing(\mathcal{H}_{\hspace{0.2mm}i})\cap \elltext_{\infty}$, $i=1,\ldots,14$. For this reason, we have computed, for each $i=1,\ldots,14$, the type $\mathcal{T}_{\mathcal{H}_{\hspace{0.2mm}i}}$ of $\mathcal{H}_{\hspace{0.2mm}i}$ and the following polynomial (called \textsl{\textsc{Camacho-Sad} polynomial of the homogeneous foliation}  $\mathcal{H}_{\hspace{0.2mm}i}$)
\begin{align*}
\mathrm{CS}_{\mathcal{H}_{\hspace{0.2mm}i}}(\lambda)=\prod\limits_{s\in\Sing(\mathcal{H}_{\hspace{0.2mm}i})\cap \ellindice_{\infty}}(\lambda-\mathrm{CS}(\mathcal{H}_{\hspace{0.2mm}i},\elltext_{\infty},s)).
\end{align*}
\noindent Table~\ref{tab:CS(lambda)} below summarizes the types and  \textsc {Camacho-Sad} polynomials of the foliations  $\mathcal{H}_{\hspace{0.2mm}i}$, $i=1,\ldots,14$.
\begingroup
\renewcommand*{\arraystretch}{1.8}
\begin{table}[h]
\begin{center}
\begin{tabular}{|c|c|c|}\hline
$i$  & $\mathcal{T}_{\mathcal{H}_{\hspace{0.2mm}i}}$              &  $\mathrm{CS}_{\mathcal{H}_{\hspace{0.2mm}i}}(\lambda)$ \\\hline
$1$  & $2\cdot\mathrm{R}_4$                                       &  $(\lambda-1)^2(\lambda+\frac{1}{4})^4$\\\hline
$2$  & $1\cdot\mathrm{R}_1+1\cdot\mathrm{R}_3+1\cdot\mathrm{R}_4$ &  $\frac{1}{491}(\lambda-1)^3(491\lambda^3+982\lambda^2+463\lambda+64)$\\\hline
$3$  & $2\cdot\mathrm{R}_2+1\cdot\mathrm{R}_4$                    &  $(\lambda-1)^3(\lambda+\frac{3}{7})^2(\lambda+\frac{8}{7})$\\\hline
$4$  & $1\cdot\mathrm{R}_2+2\cdot\mathrm{R}_3$                    &  $(\lambda-1)^3(\lambda+\frac{9}{11})^2(\lambda+\frac{4}{11})$\\\hline
$5$  & $2\cdot\mathrm{R}_1+1\cdot\mathrm{R}_2+1\cdot\mathrm{R}_4$ &  $(\lambda-1)^4(\lambda+\frac{3}{2})^2$\\\hline
$6$  & $2\cdot\mathrm{R}_1+1\cdot\mathrm{R}_2+1\cdot\mathrm{R}_4$ &  $\frac{1}{59}(\lambda-1)^4(59\lambda^2+177\lambda+64)$\\\hline
$7$  & $2\cdot\mathrm{R}_1+2\cdot\mathrm{R}_3$                    &  $(\lambda-1)^4(\lambda^2+3\lambda+1)$\\\hline
$8$  & $2\cdot\mathrm{R}_1+2\cdot\mathrm{R}_3$                    &  $(\lambda-1)^4(\lambda+\frac{3}{2})^2$\\\hline
$9$  & $1\cdot\mathrm{R}_1+2\cdot\mathrm{R}_2+1\cdot\mathrm{R}_3$ &  $\frac{1}{197}(\lambda-1)^4(197\lambda^2+591\lambda+302-10\sqrt{61})$\\\hline
$10$ & $1\cdot\mathrm{R}_1+2\cdot\mathrm{R}_2+1\cdot\mathrm{R}_3$ &  $\frac{1}{197}(\lambda-1)^4(197\lambda^2+591\lambda+302+10\sqrt{61})$\\\hline
$11$ & $4\cdot\mathrm{R}_2$                                       &  $(\lambda-1)^4(\lambda+\frac{3}{2})^2$\\\hline
$12$ & $2\cdot\mathrm{R}_1+3\cdot\mathrm{R}_2$                    &  $(\lambda-1)^5(\lambda+4)$\\\hline
$13$ & $4\cdot\mathrm{R}_1+1\cdot\mathrm{R}_4$                    &  $(\lambda-1)^5(\lambda+4)$\\\hline
$14$ & $3\cdot\mathrm{R}_1+1\cdot\mathrm{R}_2+1\cdot\mathrm{R}_3$ &  $(\lambda-1)^5(\lambda+4)$\\\hline
\end{tabular}
\end{center}
\bigskip
\caption{Types and \textsc{Camacho-Sad} polynomials of the homogeneous foliations~$\mathcal{H}_1,\ldots,\mathcal{H}_{14}.$}\label{tab:CS(lambda)}
\end{table}
\endgroup

\begin{proof}[\sl Proof of Theorem~\ref{thmalph:class-homogene-convexe-5}]
Let $\mathcal{H}$ be a homogeneous convex foliation of degree $5$ on $\pp$, defined in the affine chart $(x,y)$, by the $1$-form $$\hspace{1cm}\omega=A(x,y)\mathrm{d}x+B(x,y)\mathrm{d}y,\quad A,B\in\mathbb{C}[x,y]_5,\hspace{2mm}\gcd(A,B)=1.$$ By \cite[Remark~2.5]{Bed17} the foliation  $\mathcal{H}$ cannot have $5+1=6$ distinct radial singularities; in other words it cannot be of one of the two types $5\cdot\mathrm{R}_{1}+1\cdot\mathrm{R}_{3}$ or $4\cdot\mathrm{R}_{1}+2\cdot\mathrm{R}_{2}.$ We are then in one of the following situations:
\begin{align*}
&\mathcal{T}_{\mathcal{H}}=2\cdot\mathrm{R}_{4}\hspace{0.5mm};&&
\mathcal{T}_{\mathcal{H}}=1\cdot\mathrm{R}_{1}+1\cdot\mathrm{R}_{3}+1\cdot\mathrm{R}_{4}\hspace{0.5mm};&&
\mathcal{T}_{\mathcal{H}}=2\cdot\mathrm{R}_{2}+1\cdot\mathrm{R}_{4}\hspace{0.5mm};\\
&
\mathcal{T}_{\mathcal{H}}=1\cdot\mathrm{R}_{2}+2\cdot\mathrm{R}_{3}\hspace{0.5mm};&&
\mathcal{T}_{\mathcal{H}}=2\cdot\mathrm{R}_{1}+1\cdot\mathrm{R}_{2}+1\cdot\mathrm{R}_{4}\hspace{0.5mm};&&
\mathcal{T}_{\mathcal{H}}=2\cdot\mathrm{R}_{1}+2\cdot\mathrm{R}_{3}\hspace{0.5mm};\\
&
\mathcal{T}_{\mathcal{H}}=1\cdot\mathrm{R}_{1}+2\cdot\mathrm{R}_{2}+1\cdot\mathrm{R}_{3}\hspace{0.5mm};&&
\mathcal{T}_{\mathcal{H}}=4\cdot\mathrm{R}_{2}\hspace{0.5mm};&&
\mathcal{T}_{\mathcal{H}}=2\cdot\mathrm{R}_{1}+3\cdot\mathrm{R}_{2}\hspace{0.5mm};\\
&
\mathcal{T}_{\mathcal{H}}=4\cdot\mathrm{R}_{1}+1\cdot\mathrm{R}_{4}\hspace{0.5mm};&&
\mathcal{T}_{\mathcal{H}}=3\cdot\mathrm{R}_{1}+1\cdot\mathrm{R}_{2}+1\cdot\mathrm{R}_{3}.
\end{align*}
The proof consists in to analyze these $11$ possibilities either by applying some results in \cite{BM18} or by appealing to a specific classification taken from \cite{CGNPP15}.

\begin{itemize}
\item [$\bullet$] We know from \cite[Propositions~4.1,~4.2]{BM18} that if a homogeneous convex foliation of degree $d\geq3$ on $\pp$ is of type $2\cdot\mathrm{R}_{d-1},$\, resp. $1\cdot\mathrm{R}_{\nu}+1\cdot\mathrm{R}_{d-\nu-1}+1\cdot\mathrm{R}_{d-1}$ with $\nu\in\{1,2,\ldots,d-2\},$ then it is linearly conjugated to the foliation $\mathcal{H}_{1}^{d}$, resp. $\mathcal{H}_{3}^{d,\nu},$ given by
    \[
    \omega_1^{\hspace{0.2mm}d}=y^d\mathrm{d}x-x^d\mathrm{d}y,
    \qquad\qquad\text{resp}.\hspace{1.5mm}
    \omega_{3}^{\hspace{0.2mm}d,\nu}=\sum\limits_{i=\nu+1}^{d}\binom{{d}}{{i}}x^{d-i}y^i\mathrm{d}x-\sum\limits_{i=0}^{\nu}\binom{{d}}{{i}}x^{d-i}y^i\mathrm{d}y.
    \]
    It follows that if the foliation $\mathcal{H}$ is of type $\mathcal{T}_{\mathcal{H}}=2\cdot\mathrm{R}_{4}$,\, resp.
    $\mathcal{T}_{\mathcal{H}}=1\cdot\mathrm{R}_{1}+1\cdot\mathrm{R}_{3}+1\cdot\mathrm{R}_{4}$,\, resp.
    $\mathcal{T}_{\mathcal{H}}=2\cdot\mathrm{R}_{2}+1\cdot\mathrm{R}_{4}$, then the $1$-form $\omega$ is linearly conjugated to
    \[
    \hspace{-3.79cm}\omega_1^{\hspace{0.2mm}5}=y^5\mathrm{d}x-x^5\mathrm{d}y=\omega_1,
    \]
    \begin{eqnarray*}
    \text{resp}.\hspace{1.5mm}
    \omega_{3}^{\hspace{0.2mm}5,1}
    \hspace{-2.24mm}&=&\hspace{-2.24mm}
    \sum\limits_{i=2}^{5}\binom{{5}}{{i}}x^{5-i}y^i\mathrm{d}x-\sum\limits_{i=0}^{1}\binom{{5}}{{i}}x^{5-i}y^i\mathrm{d}y=\omega_2,
    \end{eqnarray*}
    \begin{eqnarray*}
    \text{resp}.\hspace{1.5mm}
    \omega_{3}^{\hspace{0.2mm}5,2}
    \hspace{-2.24mm}&=&\hspace{-2.24mm}
    \sum\limits_{i=3}^{5}\binom{{5}}{{i}}x^{5-i}y^i\mathrm{d}x-\sum\limits_{i=0}^{2}\binom{{5}}{{i}}x^{5-i}y^i\mathrm{d}y=\omega_3.
    \end{eqnarray*}
\item [$\bullet$] Assume that $\mathcal{T}_{\mathcal{H}}=1\cdot\mathrm{R}_{2}+2\cdot\mathrm{R}_{3}$. This means that the rational map $\Gunderline_{\mathcal{H}}\hspace{1mm}\colon\mathbb{P}^{1}_{\mathbb{C}}\rightarrow \mathbb{P}^{1}_{\mathbb{C}}$, $\Gunderline_{\mathcal{H}}(z)=-\dfrac{A(1,z)}{B(1,z)},$ possesses three fixed critical points, one of multiplicity $2$ and two of multiplicity $3.$ By \cite[page~79]{CGNPP15}, $\Gunderline_{\mathcal{H}}$ is conjugated by a \textsc{Möbius} transformation to $z\mapsto-\dfrac{z^4(3z-5)}{5z-3}$. As a result, $\omega$ is linearly conjugated to $\omega_4$.
    \vspace{2mm}
\item [$\bullet$] Let us study the possibility $\mathcal{T}_{\mathcal{H}}=2\cdot\mathrm{R}_{1}+1\cdot\mathrm{R}_{2}+1\cdot\mathrm{R}_{4}.$ Up to linear conjugation we can assume that, for some $\alpha\in\C\setminus\{0,1\},$ the points $[1:0:0],\,[0:1:0],\,[1:1:0],\,[1:\alpha:0]\in\pp$ are radial singularities of $\mathcal{H}$ with respective orders $4,2,1,1,$ or equivalently that the points $\infty=[1:0],\,[0:1],\,[1:1],\,[1:\alpha]\in\mathbb{P}^{1}_{\mathbb{C}}$ are fixed and critical for $\Gunderline_{\mathcal{H}}$ with respective multiplicities $4,2,1,1.$ By~\cite[Lemma~3.9]{BM18}, there exist constants $a_0,a_2,b\in\C^{*},a_1\in\C$ such that
    \begin{align*}
    &\hspace{3mm}B(x,y)=bx^5,&& A(x,y)=(a_0\hspace{0.2mm}x^2+a_1xy+a_2y^2)y^3,&&
    (z-1)^2\hspace{1mm} \text{divides}\hspace{1mm} P(z),&&
    (z-\alpha)^2\hspace{1mm} \text{divides}\hspace{1mm} Q(z),
    \end{align*}
    where $P(z):=A(1,z)+B(1,z)$ and $Q(z):=A(1,z)+\alpha B(1,z).$ A straightforward computation leads to
    \begin{align*}
    &a_0=\dfrac{5a_2\alpha}{3}, && a_1=-\dfrac{5a_2(\alpha+1)}{4}, && b=-\dfrac{a_2(5\alpha-3)}{12}, && (\alpha+1)(3\alpha^2-5\alpha+3)=0.
    \end{align*}
    Replacing $\omega$ by $\dfrac{\raisebox{-0.8mm}{$12$}}{\raisebox{1mm}{$a_2$}}\omega,$ we reduce it to
    \[
    \hspace{1cm}\omega=y^3(20\alpha\hspace{0.1mm}x^2-15(\alpha+1)xy+12y^2)\mathrm{d}x-(5\alpha-3)x^5\mathrm{d}y,\qquad (\alpha+1)(3\alpha^2-5\alpha+3)=0.
    \]
    This $1$-form is linearly conjugated to one of the two $1$-forms $\omega_5$ or $\omega_6$. Indeed, on the one hand, if $\alpha=-1$, then $\omega_5=-\frac{1}{4}\omega.$ On the other hand, if $3\alpha^2-5\alpha+3=0$, then
    \[
    \omega_6=\frac{121(15\alpha-16)}{81(3\alpha-8)^5}\varphi^*\omega,\quad\text{where}\hspace{1.5mm}\varphi=\Big((3\alpha-8)x,-3y\Big).
    \]
\item [$\bullet$] Assume that $\mathcal{T}_{\mathcal{H}}=2\cdot\mathrm{R}_{1}+2\cdot\mathrm{R}_{3}$. Then the rational map $\Gunderline_{\mathcal{H}}$ admits four fixed critical points, two of multiplicity $1$ and two of multiplicity $3.$ This implies, by \cite[page~79]{CGNPP15}, that up to conjugation by a \textsc{Möbius} transformation, $\Gunderline_{\mathcal{H}}$ writes as
    \[
    \hspace{1cm}z\mapsto-\frac{z^4(3z+4cz-5c-4)}{z+c},
    \]
    where $c=-1/2\pm\sqrt{5}/10$\, or \,$c=-3/10\pm\sqrt{21}/10.$ Thus, up to linear conjugation
    \[
    \hspace{1cm}\omega=y^4(3y+4cy-5cx-4x)\mathrm{d}x+x^4(y+cx)\mathrm{d}y,\qquad c\in\left\{-\frac{1}{2}\pm\frac{\sqrt{5}}{10},\,-\frac{3}{10}\pm\frac{\sqrt{21}}{10}\right\}.
    \]
    In the case where $c=-1/2\pm\sqrt{5}/10$,\, resp. $c=-3/10\pm\sqrt{21}/10$, the $1$-form $\omega$ is linearly conjugated to $\omega_7$, resp. $\omega_8$. Indeed, on the one hand, if $c=-1/2+\sqrt{5}/10$,\, resp. $c=-3/10+\sqrt{21}/10$, then $\omega_7=-2(5-2\sqrt{5})\omega$, resp. $\omega_8=-10(9-2\sqrt{21})\omega$. On the other hand, if $c=-1/2-\sqrt{5}/10$,\, resp. $c=-3/10-\sqrt{21}/10$, then
    \begin{align*}
    &\hspace{0.9cm}\omega_7=-(25+11\sqrt{5})\varphi^*\omega,\hspace{1.2mm}\quad \text{where}\hspace{1.5mm} \varphi=\left(\tfrac{3-\sqrt{5}}{2}x,y\right),\\
    &\text{resp}.\hspace{1.5mm}
    \omega_8=5(87+19\sqrt{21})\psi^*\omega,\quad \text{where}\hspace{1.5mm} \psi=\left(\tfrac{\sqrt{21}-5}{2}x,y\right).
    \end{align*}
\item [$\bullet$] We know from \cite[page~79]{CGNPP15} that up to \textsc{Möbius} transformation there are two rational maps of degree~$5$ from the \textsc{Riemann} sphere to itself having four distinct fixed critical points, one of multiplicity~$1$, two of multiplicity~$2$ and one of multiplicity~$3$; thus up to automorphisms of $\pp$ there are two homogeneous convex foliations of degree~$5$ on $\pp$ having type $1\cdot\mathrm{R}_{1}+2\cdot\mathrm{R}_{2}+1\cdot\mathrm{R}_{3}.$ Now, by Table~\ref{tab:CS(lambda)}, we have on the one hand $\mathrm{CS}_{\mathcal{H}_{\hspace{0.2mm}9}}\neq\mathrm{CS}_{\mathcal{H}_{\hspace{0.2mm}10}},$ so that the foliations $\mathcal{H}_{\hspace{0.2mm}9}$ and $\mathcal{H}_{\hspace{0.2mm}10}$ are not linearly conjugated, and on the other hand $\mathcal{T}_{\mathcal{H}_{9}}=\mathcal{T}_{\mathcal{H}_{10}}=1\cdot\mathrm{R}_{1}+2\cdot\mathrm{R}_{2}+1\cdot\mathrm{R}_{3}.$
    It follows that if the foliation $\mathcal{H}$ is of type $\mathcal{T}_{\mathcal{H}}=1\cdot\mathrm{R}_{1}+2\cdot\mathrm{R}_{2}+1\cdot\mathrm{R}_{3},$ then $\mathcal{H}$ is linearly conjugated to one of the two foliations $\mathcal{H}_{\hspace{0.2mm}9}$ or $\mathcal{H}_{\hspace{0.2mm}10}.$
    \vspace{2mm}
\item [$\bullet$] Assume that $\mathcal{T}_{\mathcal{H}}=4\cdot\mathrm{R}_{2}$. The rational map $\Gunderline_{\mathcal{H}}$ has therefore four different fixed critical points of multiplicity $2$. By \cite[page~80]{CGNPP15}, up to conjugation by a \textsc{Möbius} transformation, $\Gunderline_{\mathcal{H}}$ writes as
    \[
    \hspace{1cm}z\mapsto-\frac{z^3(z^2-5z+5)}{5z^2-10z+4}.
    \]
    As a consequence, up to linear conjugation
    \[
    \hspace{2cm}\omega=y^3(5x^2-5xy+y^2)\mathrm{d}x+x^3(4x^2-10xy+5y^2)\mathrm{d}y.
    \]
    This $1$-form is linearly conjugated to
    \[
    \omega_{11}=\frac{\raisebox{-0.8mm}{$1$}}{\raisebox{0.5mm}{$8$}}\varphi^*\omega,\quad\text{where}\hspace{1.5mm}\varphi=(x+y,2y).
    \]
\item [$\bullet$] Assume that $\mathcal{T}_{\mathcal{H}}=2\cdot\mathrm{R}_{1}+3\cdot\mathrm{R}_{2}$. Then the rational map $\Gunderline_{\mathcal{H}}$ possesses five fixed critical points, two of multiplicity~$1$ and three of multiplicity~$2.$ By \cite[page~80]{CGNPP15}, $\Gunderline_{\mathcal{H}}$ is conjugated by a \textsc{Möbius} transformation to $z\mapsto-\dfrac{z^3(z^2+5z-20)}{20z^2-5z-1}$, which implies that $\omega$ is linearly conjugated to $\omega_{12}$.
\item [$\bullet$] Let us consider the eventuality $\mathcal{T}_{\mathcal{H}}=4\cdot\mathrm{R}_{1}+1\cdot\mathrm{R}_{4}.$ Up to isomorphism, we can assume that, for some~$\alpha,\beta\in\mathbb{C}\setminus\{0,1\}$ with $\alpha\neq\beta,$ the points $\infty=[1:0],\,[0:1],\,[1:1],\,[1:\alpha],\,[1:\beta]\in\mathbb{P}^{1}_{\mathbb{C}}$ are fixed and critical for $\Gunderline_{\mathcal{H}},$ with respective multiplicities $4,1,1,1,1.$ By \cite[Lemma~3.9]{BM18}, there exist constants $a_0,a_3,b\in\C^{*},a_1,a_2\in\C$ such that
    \begin{align*}
    &B(x,y)=bx^5,&&
    A(x,y)=(a_0\hspace{0.2mm}x^3+a_1x^2y+a_2xy^2+a_3y^3)y^2,&&
    (z-1)^2\hspace{1mm}       \text{divides}\hspace{1mm} P(z),\\
    &(z-\alpha)^2\hspace{1mm} \text{divides}\hspace{1mm} Q(z),&&
    (z-\beta)^2\hspace{1mm}   \text{divides}\hspace{1mm} R(z),
    \end{align*}
    where $P(z):=A(1,z)+B(1,z)$, $Q(z):=A(1,z)+\alpha B(1,z)$ and $R(z):=A(1,z)+\beta B(1,z).$ A straightforward computation gives us
    \begin{small}
    \begin{align*}
    \hspace{6mm}&b=\dfrac{a_3\alpha^2(\alpha-1)^2}{2(\alpha^2-\alpha+1)},&&
    a_0=-\dfrac{a_3\alpha(\alpha+1)(3\alpha^2-5\alpha+3)}{2(\alpha^2-\alpha+1)},&&
    a_1=\dfrac{a_3(\alpha^4+2\alpha^3-3\alpha^2+2\alpha+1)}{\alpha^2-\alpha+1},\\
    \hspace{6mm}&\beta=\dfrac{(\alpha+1)(3\alpha^2-5\alpha+3)}{5(\alpha^2-\alpha+1)},&&
    a_2 =-\dfrac{a_3(\alpha+1)(4\alpha^2-5\alpha+4)}{2(\alpha^2-\alpha+1)},&&
    (\alpha^2-2\alpha+2)(2\alpha^2-2\alpha+1)(\alpha^2+1)=0.
    \end{align*}
    \end{small}
    \hspace{-1mm}Multiplying $\omega$ by $\dfrac{\raisebox{-0.8mm}{$2$}}{\raisebox{1mm}{$a_3$}}(\alpha^2-\alpha+1),$ we reduce it to
    \begin{align*}
    &\omega=-y^2\Big(\alpha(\alpha+1)(3\alpha^2-5\alpha+3)x^3+(\alpha+1)(4\alpha^2-5\alpha+4)xy^2-2(\alpha^2-\alpha+1)y^3\Big)\mathrm{d}x\\
    &\hspace{6.8mm}+2(\alpha^4+2\alpha^3-3\alpha^2+2\alpha+1)x^2y^3\mathrm{d}x+\alpha^2(\alpha-1)^2x^5\mathrm{d}y,
    \end{align*}
    with $(\alpha^2-2\alpha+2)(2\alpha^2-2\alpha+1)(\alpha^2+1)=0.$ This $1$-form $\omega$ is linearly conjugated to
    \[
    \omega_{13}=-\frac{(\alpha+1)(3\alpha^2-5\alpha+3)}{5\alpha^3(\alpha-1)^4}\varphi^*\omega,
    \quad\text{where}\hspace{1.5mm}
    \varphi=\left(x,\frac{5\alpha\left(\alpha-1\right)^2}{\left(\alpha+1\right)\left(3\alpha^2-5\alpha+3\right)}y\right).
    \]
\item [$\bullet$] Finally let us examine the case $\mathcal{T}_{\mathcal{H}}=3\cdot\mathrm{R}_{1}+1\cdot\mathrm{R}_{2}+1\cdot\mathrm{R}_{3}.$ Up to linear conjugation we can assume that the points $\infty=[1:0],\,[0:1],\,[1:1],\,[1:\alpha],\,[1:\beta]\in\mathbb{P}^{1}_{\mathbb{C}},$ where $\alpha\beta\in\mathbb{C}\setminus\{0,1\}$ and $\alpha\neq\beta,$ are fixed and critical for $\Gunderline_{\mathcal{H}},$ with respective multiplicities $3,2,1,1,1.$ A similar reasoning as in the previous case leads to
    \begin{align*}
    &\hspace{7mm}\omega=\omega(\alpha)=y^3\Big(\big(\alpha^2-3\alpha+1\big)\big(\alpha^2+5\alpha+1\big)x^2-2\big(\alpha+1\big)\big(\alpha^2-5\alpha+1\big)xy
    +\big(\alpha^2-7\alpha+1\big)y^2\Big)\mathrm{d}x\\
    &\hspace{2.65cm}+\alpha\hspace{0.3mm}x^4\Big(2\alpha\big(\alpha^2-\alpha+1\big)x-\big(\alpha+1\big)\big(3\alpha^2-5\alpha+3\big)y\Big)\mathrm{d}y,
    \end{align*}
    with $P(\alpha)=0$ where $P(z):=3z^6-39z^5+194z^4-203z^3+194z^2-39z+3.$ The $1$-form $\omega$ is linearly conjugated to
    \begin{align*}
    &\hspace{7mm}\omega_{14}=y^3\Big(\big(\sigma^2-3\sigma+1\big)\big(\sigma^2+5\sigma+1\big)x^2-2\big(\sigma+1\big)\big(\sigma^2-5\sigma+1\big)xy
    +\big(\sigma^2-7\sigma+1\big)y^2\Big)\mathrm{d}x\\
    &\hspace{1.68cm}+\sigma\hspace{0.3mm}x^4\Big(2\sigma\big(\sigma^2-\sigma+1\big)x-\big(\sigma+1\big)\big(3\sigma^2-5\sigma+3\big)y\Big)\mathrm{d}y,
    \end{align*}
    where $\sigma=\rho+\mathrm{i}\sqrt{\frac{1}{6}-\frac{4}{3}\rho-\frac{1}{3}\rho^2}$\, and $\rho$ is the unique real number satisfying $8\rho^3-52\rho^2+134\rho-15=0.$ Indeed, on the one hand, it is easy to see that $\sigma$ is a root of the polynomial $P$, so that $\omega_{14}=\omega(\sigma).$ On~the~other hand, a straightforward computation shows that if $\alpha_1$ and $\alpha_2$ are any two roots of $P$ then
    \begin{small}
    \begin{align*}
    \omega(\alpha_2)=-\frac{\mu}{21600}\left(13035\alpha_1^5-167802\alpha_1^4+821633\alpha_1^3-777667\alpha_1^2+743778\alpha_1-76185\right)
    \varphi^*\big(\omega(\alpha_1)\big)
    \end{align*}
    \end{small}
    \hspace{-1mm}with \begin{small}$\mu=195\alpha_2^4-202\alpha_2^3+233\alpha_2^2-42\alpha_2+3$\end{small},\, \begin{small}$\varphi=\left(x,-\dfrac{\lambda}{43200}y\right)$\end{small} where
    \begin{Small}
    \begin{align*}
    &\hspace{4mm}
    \lambda=\left(39\alpha_2^5-501\alpha_2^4+2447\alpha_2^3-2293\alpha_2^2+2343\alpha_2-477\right)
    \left(24\alpha_1^5-309\alpha_1^4+1510\alpha_1^3-1415\alpha_1^2+1446\alpha_1-21\right).
    \end{align*}
    \end{Small}
\end{itemize}
\smallskip

\noindent The foliations $\mathcal{H}_1,\ldots,\mathcal{H}_{14}$ are not linearly conjugated because we have $\mathcal{T}_{\mathcal{H}_{\hspace{0.2mm}i}}~\neq~\mathcal{T}_{\mathcal{H}_{j}}$ or $\mathrm{CS}_{\mathcal{H}_{\hspace{0.2mm}i}}\neq\mathrm{CS}_{\mathcal{H}_{j}}$ for all $i,j\in\{1,\ldots,14\}$ with $i\neq j$  (see Table~\ref{tab:CS(lambda)}). This ends the proof Theorem~\ref{thmalph:class-homogene-convexe-5}.
\end{proof}

\noindent Let $\F$ be a reduced convex foliation of degree $d\geq1$ on $\pp$ and let $\elltext$ be one of its  $3d$ invariant lines. To the pair $(\F,\elltext)$ we can associate thanks to \cite{BM19Z} a homogeneous convex foliation $\mathcal{H}_{\F}^{\ellindice}$ of degree $d$ on $\pp,$ called {\sl homogeneous degeneration of $\F$ along $\elltext,$} as follows. Let us fix homogeneous coordinates $[x:y:z]\in\pp$ such that $\elltext=(z=0)$; since  $\elltext$ is $\F$-invariant, $\F$ is described in the affine chart $z=1$ by a $1$-form $\omega$ of type $$\omega=\sum_{i=0}^{d}(A_i(x,y)\mathrm{d}x+B_i(x,y)\mathrm{d}y),$$ where $A_i,\,B_i$ are homogeneous polynomials of degree $i$.  By~\cite[Proposition~3.2]{BM19Z} we have $\gcd(A_d,B_d)=1$ which allows us to define the foliation $\mathcal{H}_{\F}^{\ellindice}$ by the $1$-form $$\omega_d=A_d(x,y)\mathrm{d}x+B_d(x,y)\mathrm{d}y.$$
It is easy to check that this definition is intrinsic, {\it i.e.} it does not depend on the choice of the homogeneous coordinates $[x:y:z]$ nor on the choice of the $1$-form $\omega$ describing $\F.$

\noindent The following result, taken from \cite[Proposition~3.2]{BM19Z}, will be very useful to us.
\begin{pro}[\cite{BM19Z}]\label{prop:F-dégénère-H}
{\sl With the previous notations, the foliation  $\mathcal{H}_{\F}^{\ellindice}$ has the following properties:
\vspace{1mm}
\begin{itemize}
\item [\textit{(i)}] $\mathcal{H}_{\F}^{\ellindice}$ belongs to the \textsc{Zariski} closure of the $\mathrm{Aut}(\pp)$-orbit of~$\F;$
\item [\textit{(ii)}] $\elltext$ is  invariant by $\mathcal{H}_{\F}^{\ellindice}$;
\vspace{0.5mm}
\item [\textit{(iii)}] $\Sing(\mathcal{H}_{\F}^{\ellindice})\cap\elltext=\Sing(\F)\cap\elltext$;
\vspace{0.5mm}
\item [\textit{(iv)}] every singular point of $\mathcal{H}_{\F}^{\ellindice}$ on $\elltext$ is non-degenerate;
\vspace{0.5mm}
\item [\textit{(v)}] a point $s\in\elltext$ is a radial singularity of order $k\leq d-1$ for $\mathcal{H}_{\F}^{\ellindice}$ if and only if it is for $\F$;
\vspace{0.5mm}
\item [\textit{(vi)}] $\forall\hspace{1mm}s\in\Sing(\mathcal{H}_{\F}^{\ellindice})\cap\elltext,
              \hspace{1mm}
              \mathrm{CS}(\mathcal{H}_{\F}^{\ellindice},\elltext,s)=\mathrm{CS}(\F,\elltext,s).$
\end{itemize}
}
\end{pro}

\begin{proof}[\sl Proof of Proposition~\ref{propalph:CS(F)-CS(d)}]
Since by hypothesis $\F$ is reduced convex, all its singularities are non-degenerate (\cite[Lemma~6.8]{BM18}). Let $\elltext$ be an invariant line of $\F.$ By~\cite[Proposition~2.3]{Bru15} it follows thseeat $\F$ possesses exactly $d+1$ singularities on $\elltext.$ The \textsc{Camacho}-\textsc{Sad} formula (see~\cite{CS82}) $\sum\limits_{s\in\Sing(\F)\cap \ellindice}\mathrm{CS}(\F,\elltext,s)=1$ then implies the existence of $s\in\Sing(\F)\cap \elltext$ such that $\mathrm{CS}(\F,\elltext,s)\in\C\setminus\{0,1\}$; as a result $\mathcal{CS}(\F)\neq\emptyset.$

\noindent Let $\lambda\in\mathcal{CS}(\F)\subset\C\setminus\{0,1\}$; there is a line $\elltext_1$ invariant by $\F$ and a singular point $s\in\elltext_1$ of $\F$ such that $\mathrm{CS}(\F,\elltext_1,s)=\lambda.$ By \cite[Lemma~3.1]{BM19Z} through the point $s$ passes a second $\F$-invariant line $\elltext_{2}.$ Since $\mathrm{CS}(\F,\elltext_1,s)\mathrm{CS}(\F,\elltext_2,s)=1,$ we have $\mathrm{CS}(\F,\elltext_2,s)=\frac{1}{\lambda}$; thus $\frac{1}{\lambda}\in\mathcal{CS}(\F).$ Moreover, by \cite[Proposition~3.2]{BM19Z} (\emph{cf.}~assertion \textit{(vi)}~of~Proposition~\ref{prop:F-dégénère-H} above), we have
\begin{align*}
&
\mathrm{CS}(\mathcal{H}_{\F}^{\ellindice_1},\elltext_1,s)=\mathrm{CS}(\F,\elltext_1,s)=\lambda
&&\text{and}&&
\mathrm{CS}(\mathcal{H}_{\F}^{\ellindice_2},\elltext_2,s)=\mathrm{CS}(\F,\elltext_2,s)=\frac{1}{\lambda},
\end{align*}
which shows that $\lambda\in\mathcal{HCS}_d,$ hence $\mathcal{CS}(\F)\subset\mathcal{HCS}_d.$
\end{proof}

\noindent An immediate consequence of Table~\ref{tab:CS(lambda)} is the following:
\begin{cor}\label{cor:CS5}
{\sl $\mathcal{HCS}_5=\{-4^{\pm 1},-\frac{3}{2}\pm\frac{\sqrt{5}}{2}\}=\mathcal{CS}(\F_0^5)\cup\mathcal{CS}(\Hilbertcinq).$
}
\end{cor}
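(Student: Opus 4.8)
The plan is to reduce the corollary to a finite inspection of Table~\ref{tab:CS(lambda)}. By the very definition of $\mathcal{CS}_5$, a number $\lambda\in\C\setminus\{0,1\}$ lies in $\mathcal{CS}_5$ if and only if both $\lambda$ and $\frac1\lambda$ occur as \textsc{Camacho-Sad} indices along $\elltext_\infty$ of (possibly distinct) homogeneous convex foliations of degree $5$ on $\pp$. By Theorem~\ref{thmalph:class-homogene-convexe-5} any such foliation is linearly conjugated to one of $\mathcal{H}_1,\ldots,\mathcal{H}_{14}$, and a linear conjugation extends to an automorphism of $\pp$ fixing $\elltext_\infty$ and preserving the local invariants $\mathrm{CS}(\,\cdot\,,\elltext_\infty,\,\cdot\,)$; hence the set of all CS indices along $\elltext_\infty$ realised by degree-$5$ homogeneous convex foliations is exactly the set $S$ of roots distinct from $1$ of the fourteen polynomials $\mathrm{CS}_{\mathcal{H}_i}$ of the table. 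I would thus first record the reformulation $\mathcal{CS}_5=\{\lambda\in\C\setminus\{0,1\}:\lambda\in S\text{ and }\tfrac1\lambda\in S\}$, which turns the statement into the assertion that the only mutually inverse pairs inside $S$ are $\{-4,-\tfrac14\}$ and $\{-\tfrac32\pm\tfrac{\sqrt5}2\}$.

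For the inclusion $\supseteq$ I would invoke Remark~\ref{rem:CS(Hilbertcinq)-CS(Fermat)}, where $\mathcal{CS}(\F_0^5)=\{(1-5)^{\pm1}\}=\{-4,-\tfrac14\}$ and $\mathcal{CS}(\Hilbertcinq)=\{-\tfrac32\pm\tfrac{\sqrt5}2\}$ are both shown to sit in $\mathcal{CS}_5$. In terms of the table this is transparent: $-4$ is a root of $\mathrm{CS}_{\mathcal{H}_{12}}$ and $-\tfrac14$ a root of $\mathrm{CS}_{\mathcal{H}_1}$, while $-\tfrac32\pm\tfrac{\sqrt5}2$ are the two roots of the factor $\lambda^2+3\lambda+1$ of $\mathrm{CS}_{\mathcal{H}_7}$, which are mutually inverse because their product is the constant term $1$.

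The substantive part is the reverse inclusion, i.e. eliminating every other inverse pair in $S$. Reading off Table~\ref{tab:CS(lambda)}, the set $S$ consists of the rationals $-\tfrac14,-\tfrac37,-\tfrac87,-\tfrac9{11},-\tfrac4{11},-\tfrac32,-4$; the quadratic irrationalities coming from $\lambda^2+3\lambda+1$ (foliation $\mathcal{H}_7$, lying in $\Q(\sqrt5)$), from $59\lambda^2+177\lambda+64$ (foliation $\mathcal{H}_6$, of discriminant $16225=25\cdot649$, hence lying in $\Q(\sqrt{649})$), and from $197\lambda^2+591\lambda+302\mp10\sqrt{61}$ (foliations $\mathcal{H}_9,\mathcal{H}_{10}$); and the three roots of $491\lambda^3+982\lambda^2+463\lambda+64$ (foliation $\mathcal{H}_2$). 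I would treat these families separately. The inverses of the seven rationals are $-4,-\tfrac73,-\tfrac78,-\tfrac{11}9,-\tfrac{11}4,-\tfrac23,-\tfrac14$, and only $-4$ and $-\tfrac14$ lie in $S$: indeed a nonzero rational cannot be a root of $\mathcal{H}_6$ (as $649$ is not a square), nor of $\mathcal{H}_9,\mathcal{H}_{10}$ (whose coefficients are irrational), nor of the cubic $\mathcal{H}_2$ (which, since $491$ is prime, admits no rational root by the rational-root test). For the quadratic surds I would use that the two roots of $a\lambda^2+b\lambda+c$ are mutually inverse precisely when $c=a$: this holds for $\mathcal{H}_7$ but fails for $\mathcal{H}_6$ ($64\neq59$) and for $\mathcal{H}_9,\mathcal{H}_{10}$ ($302\mp10\sqrt{61}\neq197$). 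That an inverse of a root of $\mathcal{H}_6$, $\mathcal{H}_9$ or $\mathcal{H}_{10}$ be a CS index of a \emph{different} foliation is then excluded by a field argument: since two inverse numbers generate the same subfield of $\C$ over $\Q$, such a coincidence would force two of the distinct quadratic fields $\Q(\sqrt5)$, $\Q(\sqrt{649})$ and the fields of $\mathcal{H}_9,\mathcal{H}_{10}$ (which contain $\sqrt{61}$) to agree, and it would also force a root of the irreducible cubic $\mathcal{H}_2$ into a quadratic field; both are impossible. The only inverse pair not settled this way — a root of $\mathcal{H}_9$ inverse to a root of $\mathcal{H}_{10}$, both generating fields containing $\sqrt{61}$ — is ruled out by a direct resultant computation. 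Finally the cubic $\mathcal{H}_2$ has one real root $r_0$, whose inverse is a real cubic irrationality and so is not in $S$, and a pair of complex conjugate roots $z,\bar z$ with $|z|^2=-\tfrac{64}{491\,r_0}\neq1$, so that $\tfrac1z\notin\{z,\bar z\}$; as no other $\mathcal{H}_j$ has a non-real CS index, the inverses of $z$ and $\bar z$ also miss $S$. Combining these cases leaves exactly the two pairs above, proving $\mathcal{CS}_5=\{-4^{\pm1},-\tfrac32\pm\tfrac{\sqrt5}2\}=\mathcal{CS}(\F_0^5)\cup\mathcal{CS}(\Hilbertcinq)$.

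I expect the main obstacle to be precisely this last bookkeeping for the non-rational indices: one must guarantee that no inverse of a quadratic surd or of a cubic root attached to one foliation secretly coincides with a CS index of another. The field-theoretic separation by the distinct square-free radicands $5$, $61$, $649$ together with the irreducibility of the degree-$3$ factor of $\mathrm{CS}_{\mathcal{H}_2}$ is what makes this elimination rigorous rather than merely numerical, leaving only the single $\mathcal{H}_9$--$\mathcal{H}_{10}$ comparison to a short explicit computation.
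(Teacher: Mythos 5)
Your proof is correct and follows the same route as the paper: the paper offers no argument at all, simply declaring the corollary ``an immediate consequence of Table~\ref{tab:CS(lambda)}'', and your reformulation of $\mathcal{CS}_5$ as the set of mutually inverse pairs among the roots of the fourteen \textsc{Camacho-Sad} polynomials, followed by the case-by-case elimination, is exactly the verification the authors leave implicit. Your bookkeeping checks out (in particular the deferred resultant of $197\lambda^2+591\lambda+302-10\sqrt{61}$ against $(302+10\sqrt{61})\lambda^2+591\lambda+197$ equals $423018100\neq 0$, and the discriminants of the $\mathcal{H}_9,\mathcal{H}_{10}$ factors are positive, so those indices are indeed real), so nothing essential is missing.
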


\noindent The proof of Theorem~\ref{thmalph:Fermat} uses Lemma~\ref{lem:3-singularites-radiales-non-alignees} and Lemma~\ref{lem:CS(H)=(1-d)pm1} stated below.
\begin{lem}\label{lem:3-singularites-radiales-non-alignees}
{\sl Let  $\F$ be a foliation of degree $d\geq2$ on $\pp$ having two radial singularities $m_1,m_2$ of maximal order $d-1.$ Then the line $(m_1m_2)$ cannot contain a third radial singularity of $\F.$
}
\end{lem}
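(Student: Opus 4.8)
The plan is to split the argument according to whether the line $\elltext=(m_1m_2)$ is invariant by $\F$: the non-invariant case falls to a tangency count, while in the invariant case I would extract a rigid normal form from the two maximality hypotheses and simply read off the \textsc{Camacho}--\textsc{Sad} indices of the remaining singularities on $\elltext$.

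First I would prove that $\elltext$ is necessarily invariant, using only $m_1,m_2$. Because $m_i$ is radial of maximal order $d-1$, the homogeneous components of degree $\le d-1$ (in coordinates centred at $m_i$) of a $1$-form defining $\F$ are multiples of the radial form $y\,\mathrm{d}x-x\,\mathrm{d}y$; hence every line through $m_i$ is either invariant or meets $\F$ with tangency $\ge d$ at $m_i$. If $\elltext$ were not invariant, the divisor $\Tang(\F,\elltext)$, of degree $d$, would carry mass $\ge d$ at $m_1$ and $\ge d$ at the distinct point $m_2$, for a total $\ge 2d>d$; this contradiction forces $\elltext$ to be invariant.

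Next, assuming $\elltext$ invariant, I would normalize coordinates so that $m_1=[0:0:1]$, $m_2=[1:0:0]$ and $\elltext=\{y=0\}$, and represent $\F$ by a polynomial vector field $X=P\,\partial_x+Q\,\partial_y$ of degree $d$. Maximality at $m_1$ means the polar curve $\{Py-Qx=0\}$, of degree $d+1$, vanishes to order $d+1$ at the origin, hence is a cone of $d+1$ lines through $m_1$; equivalently $P=x\varphi+P_d$, $Q=y\varphi+Q_d$ with $\deg\varphi\le d-2$ and $P_d,Q_d$ homogeneous of degree $d$. Maximality at $m_2=[1:0:0]$, whose pencil of lines is the family of horizontals $\{y=c\}$, says that the tangency locus $\{Q=0\}$ meets a generic horizontal only at $m_2$; this forces $Q$ to be $x$-independent, whence $\varphi=\varphi(y)$, $Q_d=c\,y^d$, and $Q=Q(y)$. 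I expect this extraction of the normal form to be the main obstacle: the point is to recognize that the two maximal-order conditions rigidify $\F$ enough that $Q$ depends on $y$ alone and $P(x,0)=\varphi(0)\,x+c'x^{d}$ with $c'$ the coefficient of $x^d$ in $P_d$.

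Finally I would compute the indices along $\elltext$. Invariance together with $Q=Q(y)$ gives $Q(0)=0$, so the singularities on $\elltext$ other than $m_1,m_2$ are exactly the $d-1$ nonzero roots $x_i$ of $P(x,0)=x\bigl(\varphi(0)+c'x^{d-1}\bigr)$; radiality of $m_1$ forces its linear part $\varphi(0)\,\mathrm{Id}$ to be nonzero, so these are genuine non-degenerate singularities. At $(x_i,0)$ the eigenvalue of $X$ along $\elltext$ is $\partial_xP(x_i,0)=\varphi(0)(1-d)$ and the transverse one is $\partial_yQ(x_i,0)=\varphi(0)$, so that
\[
\mathrm{CS}(\F,\elltext,(x_i,0))=\frac{\varphi(0)}{\varphi(0)(1-d)}=\frac{1}{1-d}.
\]
Since a radial singularity has scalar linear part and therefore \textsc{Camacho}--\textsc{Sad} index $1$ along any invariant line through it, while $\tfrac{1}{1-d}\neq1$ for $d\ge2$, none of the points $(x_i,0)$ is radial. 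As these exhaust $\Sing\F\cap\elltext\setminus\{m_1,m_2\}$, the line $(m_1m_2)$ carries no third radial singularity, as claimed.
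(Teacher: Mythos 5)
Your proof is correct and, at its core, runs along the same lines as the paper's: invariance of $\ell=(m_1m_2)$ by a tangency count (the paper cites \cite[Proposition~2.2]{Bru15} for exactly this step), a rigid normal form extracted from the two maximal radial singularities, and the observation that every remaining singularity on $\ell$ has \textsc{Camacho}--\textsc{Sad} index $\tfrac{1}{1-d}\neq1$, hence cannot be radial. The packaging differs: the paper sends both $m_1,m_2$ to infinity, so that the two maximality conditions force the top homogeneous part of the defining $1$-form to be $ay^d\mathrm{d}x+bx^d\mathrm{d}y$, and then reads off the indices from the associated homogeneous convex foliation of type $2\cdot\mathrm{R}_{d-1}$, citing \cite[Propositions~6.3 and 6.4]{BM18}; you keep $m_1$ at the origin and rederive those inputs by hand, which makes your argument self-contained. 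Note also that your last step could be made convention-free: at $(x_i,0)$ the two eigenvalues $(1-d)\varphi(0)$ and $\varphi(0)$ are distinct, so the linear part is not scalar and the point cannot be radial, with no appeal to the index at all.

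One step needs a (harmless) repair: you represent $\F$ in your chart by a vector field of degree $d$, but this is legitimate only when the line at infinity of that chart is $\F$-invariant, and the only invariant line you have produced is $\ell=\{y=0\}$, which is not the line at infinity in your normalization. In general one must take $X=P\partial_x+Q\partial_y$ with $\deg P,\deg Q\le d+1$ and top homogeneous parts $P_{d+1}=xg$, $Q_{d+1}=yg$ for some $g$ homogeneous of degree $d$. This costs nothing: $g$ cancels in the polar $Py-Qx$, so maximality at $m_1$ gives $P=x\varphi+P_d+xg$, $Q=y\varphi+Q_d+yg$; the $x$-independence of $Q$ forced by maximality at $m_2$ then yields $g=e\,y^d$, and since $xg$, $yg$, $\partial_x(xg)$ and $\partial_y(yg)$ all vanish on $\{y=0\}$ for $d\ge2$, the quantities $P(x,0)$, $\partial_xP(x_i,0)$ and $\partial_yQ(x_i,0)$ that you compute are unchanged. (Finally, your phrase ``exactly the $d-1$ nonzero roots'' tacitly assumes $c'\neq0$; if $c'=0$ there are no singularities on $\ell$ besides $m_1,m_2$ and the conclusion holds vacuously, so nothing is lost.)
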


\begin{proof}
Let us choose homogeneous coordinates $[x:y:z]\in\pp$ such that $m_1=[0:1:0]$ and $m_2=[1:0:0].$ Thanks to \cite[Proposition~2.2]{Bru15} (\emph{cf.} \cite[Remark~1.2]{Bed17}) the line  $\ell=(m_1m_2)$ must be invariant by  $\F.$ Then~the foliation  $\F$ is given in the affine chart $z=1$ by a $1$-form $\omega$ of type $\omega=\omega_0+\omega_1+\cdots+\omega_d,$ where, for $0\leq i\leq d,\omega_i=A_i(x,y)\mathrm{d}x+B_i(x,y)\mathrm{d}y,$ with $A_i,B_i$ homogeneous polynomials of degree $i.$

\noindent Writing explicitly that the points $m_j, j=1,2,$ are radial singularities of maximal order $d-1$ of $\F$ (see \cite[Proposition~6.3]{BM18}), we obtain that the highest degree homogeneous part $\omega_d$ of $\omega$ is of the form $\omega_d=ay^d\mathrm{d}x+bx^d\mathrm{d}y,$ with $a,b\in\C^*.$ Thus, $\omega_d$ defines a homogeneous convex foliation $\mathcal{H}$ of degree $d$ on $\pp$ of type $\mathcal{T}_{\mathcal{H}}=2\cdot\mathrm{R}_{d-1}.$ If we would know that $\F$ was a convex reduced foliation then $\mathcal H=\mathcal H^\ell_\F$ for the invariant line  $\ell=(m_1m_2)$  and we could apply Proposition~\ref{prop:F-dégénère-H} to conclude. Anyway, reasoning as in the proof of~\cite[Proposition 6.4]{BM18}, we see that $\F $ and $\mathcal{H}$ have the same~singularities on the line $(m_1m_2)$ and that every singularity $s$ of $\F$ on $(m_1m_2)$ distinct from $ m_1$ and $m_2$ is non-degenerate and has \textsc{Camacho}-\textsc{Sad} index $\mathrm{CS}(\F,(m_1m_2),s)=\mathrm{CS}(\mathcal{H},(m_1m_2),s)=\frac{1}{1-d}\neq1,$ hence the lemma.
\end{proof}

\begin{lem}\label{lem:CS(H)=(1-d)pm1}
{\sl Let $\mathcal H$ be a homogeneous convex foliation of degree $d\geq2$ on $\pp.$ Assume that every non radial singularity $s$ of $\mathcal{H}$ on $\elltext_\infty$ has \textsc{Camacho}-\textsc{Sad} index $\mathrm{CS}(\mathcal H,\elltext_\infty,s)\in\{(1-d)^{\pm 1}\}.$ Denote by $\kappa_0$ the number of (distinct) radial singularities of $\mathcal{H}$ and by $\kappa_1$ (resp. $\kappa_2$) the number of singularities $s\in\elltext_\infty$ of $\mathcal{H}$ such that  $\mathrm{CS}(\mathcal H,\elltext_\infty,s)=1-d$ (resp. $\mathrm{CS}(\mathcal H,\elltext_\infty,s)=\frac{1}{1-d}$). Then
\begin{itemize}
  \item [--] either $(\kappa_0,\kappa_1,\kappa_2)=(d,1,0)$;
  \item [--] or $(\kappa_0,\kappa_1,\kappa_2)=(2,0,d-1),$ in which case $\mathcal{T}_{\mathcal{H}}=2\cdot\mathrm{R}_{d-1}.$
\end{itemize}
}
\end{lem}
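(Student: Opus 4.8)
The plan is to encode the two hypotheses into a pair of linear relations among $\kappa_0,\kappa_1,\kappa_2$ — one obtained by counting the singular points of $\mathcal{H}$ on $\elltext_{\infty}$, the other from the \textsc{Camacho}-\textsc{Sad} formula along $\elltext_{\infty}$ — and then to extract the two admissible triples by an elementary integrality argument, finally reading off the type in the degenerate case from the convexity constraint $\sum_{k}k\,r_k=2d-2$.

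First I would recall two structural facts about the homogeneous convex foliation $\mathcal{H}$, written in the affine chart by $\omega=A\,\mathrm{d}x+B\,\mathrm{d}y$ with $A,B\in\mathbb{C}[x,y]_d$ and $\gcd(A,B)=1$. By \cite{BM18} the singular points of $\mathcal{H}$ on $\elltext_{\infty}$ are exactly the images on $\elltext_{\infty}\cong\sph$ of the fixed points of the degree-$d$ map $\Gunderline_{\mathcal{H}}$, the radial ones being those that are in addition critical; moreover each radial singularity $s$ satisfies $\mathrm{CS}(\mathcal{H},\elltext_{\infty},s)=1$. (This value also follows from the fact that the linear part of $\mathcal{H}$ at a radial point is the radial vector field, whence equal eigenvalues, and it is visible a posteriori in Table~\ref{tab:CS(lambda)}, where $(\lambda-1)^{\kappa_0}$ always divides $\mathrm{CS}_{\mathcal{H}_{\hspace{0.2mm}i}}(\lambda)$.) Since for $d\geq2$ the three numbers $1$, $1-d$ and $\tfrac{1}{1-d}$ are pairwise distinct, and since by hypothesis every non-radial singularity on $\elltext_{\infty}$ has index $1-d$ or $\tfrac{1}{1-d}$, the set $\Sing\mathcal{H}\cap\elltext_{\infty}$ splits into the three disjoint classes counted by $\kappa_0$, $\kappa_1$, $\kappa_2$. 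Finally I would recall that $\mathcal{H}$ has exactly $d+1$ singularities on $\elltext_{\infty}$, all non-degenerate (\cite{BM18}); equivalently, under our hypothesis all $d+1$ fixed points of $\Gunderline_{\mathcal{H}}$ are simple, their multipliers $0,\tfrac{d}{d-1},d$ (read off from $\mathrm{CS}=\tfrac{1}{1-\rho}$) being all different from $1$.

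Next I would write down the two relations. Counting the singularities on $\elltext_{\infty}$ gives
\[
\kappa_0+\kappa_1+\kappa_2=d+1,
\]
while the \textsc{Camacho}-\textsc{Sad} formula \cite{CS82} along $\elltext_{\infty}$ (a line of self-intersection $1$ in $\pp$) gives
\[
\kappa_0\cdot1+\kappa_1(1-d)+\kappa_2\cdot\tfrac{1}{1-d}=1.
\]
Subtracting the second relation from the first and clearing denominators collapses the system to the single relation
\[
(d-1)\kappa_1+\kappa_2=d-1,\qquad \kappa_1,\kappa_2\in\{0,1,2,\dots\}.
\]
For $d\geq2$ one has $(d-1)\kappa_1\geq2(d-1)>d-1$ as soon as $\kappa_1\geq2$, so $\kappa_1\in\{0,1\}$; this leaves exactly $(\kappa_1,\kappa_2)=(1,0)$ or $(0,d-1)$, and substituting back into the counting relation yields the two announced triples $(\kappa_0,\kappa_1,\kappa_2)=(d,1,0)$ and $(\kappa_0,\kappa_1,\kappa_2)=(2,0,d-1)$.

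It remains to identify the type in the second case. There $\kappa_0=2$, so $\mathcal{H}$ has exactly two radial singularities, of orders $k_1,k_2\in\{1,\dots,d-1\}$. Using the convexity constraint $\sum_{k=1}^{d-1}k\,r_k=2d-2$ from the introduction, which here reads $k_1+k_2=2(d-1)$, together with $k_1,k_2\leq d-1$, I obtain $k_1=k_2=d-1$, that is $\mathcal{T}_{\mathcal{H}}=2\cdot\mathrm{R}_{d-1}$. The whole weight of the argument rests on the two structural inputs of the second paragraph — the value $1$ of the \textsc{Camacho}-\textsc{Sad} index at radial singularities and the count $d+1$ of singularities on $\elltext_{\infty}$; once these are granted the conclusion is forced by the \textsc{Camacho}-\textsc{Sad} identity and integrality, with no genuine computation. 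The one point requiring care is to be sure the hypothesis accounts for \emph{every} singularity of $\mathcal{H}$ on $\elltext_{\infty}$ (each being radial, or non-radial of index $1-d$, or non-radial of index $\tfrac{1}{1-d}$), so that the three classes exhaust $\Sing\mathcal{H}\cap\elltext_{\infty}$ and the counting relation is an exact equality.
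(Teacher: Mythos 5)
Your proof is correct and follows essentially the same route as the paper's: both combine the count of $d+1$ non-degenerate singularities on $\elltext_\infty$ (Crane's theorem), the fact that radial singularities are exactly those with \textsc{Camacho}-\textsc{Sad} index $1$, and the \textsc{Camacho}-\textsc{Sad} formula to get the linear system $\kappa_0+\kappa_1+\kappa_2=d+1$, $\kappa_0+(1-d)\kappa_1+\frac{\kappa_2}{1-d}=1$, which nonnegativity then solves. Your only addition is to spell out, via $\sum_k k\,r_k=2d-2$ and $k_i\leq d-1$, why $\kappa_0=2$ forces $\mathcal{T}_{\mathcal{H}}=2\cdot\mathrm{R}_{d-1}$, a step the paper leaves implicit.
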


\noindent Before proving this lemma let us make two remarks:
\begin{rem}\label{rem:homogene-convexe-d+1-singularites}
By \cite[Theorem~4.3]{Cra06} every homogeneous convex foliation of degree~$d$ on the complex projective plane has exactly~$d+1$ singularities on the line at infinity, necessarily non-degenerate.
\end{rem}

\begin{rem}\label{rem:radiale-equivaut-CS=1}
A straightforward computation shows that if a homogeneous foliation $\mathcal{H}$ on $\pp$ possesses a non-degenerate singularity $s\in\elltext_\infty$ such that $\mathrm{CS}(\mathcal{H},\elltext_\infty,s)=1,$ then $s$ is necessarily radial. In particular, when $\mathcal{H}$ is convex, a singularity $s\in\elltext_\infty$ of $\mathcal{H}$ is radial if and only if it has \textsc{Camacho}-\textsc{Sad} index $\mathrm{CS}(\mathcal{H},\elltext_\infty,s)=1.$
\end{rem}

\begin{proof}[\sl Proof of Lemma~\ref{lem:CS(H)=(1-d)pm1}]
The \textsc{Camacho}-\textsc{Sad} formula $\sum\limits_{s\in\Sing(\mathcal H)\cap \ellindice_\infty}\mathrm{CS}(\mathcal H,\elltext_\infty,s)=1$  (see~\cite{CS82}) and Remarks~\ref{rem:homogene-convexe-d+1-singularites} and~\ref{rem:radiale-equivaut-CS=1} imply that
\begin{align*}
&\kappa_0+\kappa_1+\kappa_2=d+1 &&\text{and} &&\kappa_0+(1-d)\kappa_1+\frac{\kappa_2}{1-d}=1.
\end{align*}
From these two equations we obtain that $\kappa_0=2+\kappa_1(d-2)$\, and \,$\kappa_2=(d-1)(1-\kappa_1)\geq0,$ so that $\kappa_1\in\{0,1\},$ hence the lemma.
\end{proof}

\begin{proof}[\sl Proof of Theorem~\ref{thmalph:Fermat}] The implication $\textit{(3)}\Rightarrow\textit{(1)}$ follows from \cite[Proposition~6.3]{BM18} and from Lemma~\ref{lem:3-singularites-radiales-non-alignees}.

\noindent The fact that \textit{(1)} implies \textit{(2)} follows from the reduced convexity of the foliation $\F_0^d$ and from the equality  $\mathcal{CS}(\F_0^d)=\{(1-d)^{\pm 1}\}$ (Remark~\ref{rem:CS(Hilbertcinq)-CS(Fermat)}).

\noindent Let us show that \textit{(2)} implies \textit{(3)}. Assume that $\F$ is reduced convex and that $\mathcal{CS}(\F)=\{(1-d)^{\pm 1}\}.$ Let~$m$ be a non radial singular point of $\F$; through $m$ pass exactly two $\F$-invariant lines $\elltext_{m}^{(1)}$ and $\elltext_{m}^{(2)}$ (\cite[Lemma~3.1]{BM19Z}). It~follows that $\mathrm{CS}(\F,\elltext_{m}^{(i)},m)=(1-d)^{\pm 1}$ for $i=1,2.$ Up to renumbering the $\elltext_{m}^{(i)},$ we can assume that $\mathrm{CS}(\F,\elltext_{m}^{(1)},m)=\frac{1}{1-d}$ and $\mathrm{CS}(\F,\elltext_{m}^{(2)},m)=1-d$ for any choice of the non radial singularity $m\in\Sing\F.$ Moreover, according to Proposition~\ref{prop:F-dégénère-H}, for any invariant line $\elltext$ of $\F$ and for any non radial singularity $s\in\elltext$ of the homogeneous degeneration $\mathcal{H}_{\F}^{\ellindice}$ of $\F$ along $\elltext,$ we have $\mathrm{CS}(\mathcal{H}_{\F}^{\ellindice},\elltext,s)=\mathrm{CS}(\F,\elltext,s)\in\C\setminus\{0,1\}$ and therefore $\mathrm{CS}(\mathcal{H}_{\F}^{\ellindice},\elltext,s)\in\{(1-d)^{\pm 1}\}.$ It follows by Lemma~\ref{lem:CS(H)=(1-d)pm1} that $\mathcal{H}_{\F}^{\ellindice_{m}^{(1)}}$ is of type $2\cdot\mathrm{R}_{d-1}$. This implies, according to assertion~\textit{(v)} of~Proposition~\ref{prop:F-dégénère-H}, that~$\F$ possesses two radial singularities $m_1,m_2$ of maximal order $d-1$ on the line~$\elltext_{m}^{(1)}.$ Let $m'$ be another non radial singular point of $\F$ not belonging to the line $\elltext_{m}^{(1)}.$ For any $s\in\Sing\F$ let us denote, as in~\cite[Section~1]{BM18}, by $\tau(\F,s)$ the tangency order of $\F$ with a generic line passing through $s.$ For~$i=1,2$ we have $\tau(\F,m')+\tau(\F,m_i)=1+d>\deg\F,$ which implies (\emph{cf.}~\cite[Proposition~2.2]{Bru15}) that the lines $(m'm_i)$ are invariant by $\F.$ Thus, the line $\elltext_{m'}^{(1)}$ is one of the lines $(m'm_1)$ or $(m'm_2)$ and it in turn contains another radial singularity $m_3$ of maximal order $d-1$ of $\F.$
\end{proof}

\noindent The proof of Theorem~\ref{thmalph:Hilbert5} uses the following lemma for $d=5$ which we state in arbitrary degree $d$ as it could be used in other situations. It can be proved in the same way as in \cite[Proposition~6.3]{BM18}.
\begin{lem}\label{lem:3-singularites-radiales-d-2}
{\sl Let $\F$ be a foliation of degree $d\ge 3$ on $\pp.$ Assume that the points $m_1=[0:0:1],\,m_2=[1:0:0]$\, and \,$m_3=[0:1:0]$ are radial singularities of order $d-2$ of $\F.$ Let $\omega$ be a $1$-form defining $\F$ in the affine chart $z=1.$ Then $\omega$ is of the form
\begin{align*}
&\omega=(x\mathrm{d}y-y\mathrm{d}x)(\lambda_{0,0}+\lambda_{1,0}x+\lambda_{0,1}y+\lambda_{1,1}xy)+y^{d-2}(a_{1,0}x+a_{0,1}y+a_{1,1}xy+a_{0,2}y^2)\mathrm{d}x\\
&\hspace{0.7cm}+x^{d-2}(b_{1,0}x+b_{0,1}y+b_{1,1}xy+b_{2,0}x^2)\mathrm{d}y,
\end{align*}
where $\lambda_{i,j},a_{i,j},b_{i,j}\in\mathbb{C}$ with $\lambda_{0,0}\neq0.$
}
\end{lem}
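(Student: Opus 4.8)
The plan is to place the three radial singularities at the vertices of the coordinate triangle, first force the three sides of that triangle to be $\F$-invariant, and then read off the normal form by imposing, symmetrically at the three vertices, that the singularity is radial of order $d-2$. Recall that a radial singularity of order $k$ has tangency order $k+1$ with a generic line through it (this is the value used, for $k=d-1$, in the proof of Theorem~\ref{thmalph:Fermat}). Since each $m_j$ is radial of order $d-2$, we get $\tau(\F,m_i)+\tau(\F,m_j)=2(d-1)>d$ for every $i\neq j$ (here $d\geq3$ is used), so by \cite[Proposition~2.2]{Bru15} each of the three lines $(m_im_j)$ is $\F$-invariant. These lines are $\{x=0\}$, $\{y=0\}$ and $\elltext_\infty=\{z=0\}$. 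Invariance of $\elltext_\infty$ lets me write $\omega=\sum_{i=1}^{d}\omega_i$ with $\omega_i=A_i\mathrm{d}x+B_i\mathrm{d}y$ homogeneous of degree $i$ (the constant part vanishes because $m_1$ is singular), while invariance of $\{y=0\}$ and $\{x=0\}$ forces $y\mid A_i$ and $x\mid B_i$, so that $\omega_i=y\hat A_{i-1}\mathrm{d}x+x\hat B_{i-1}\mathrm{d}y$ with $\hat A_{i-1},\hat B_{i-1}$ homogeneous of degree $i-1$.

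Next I would translate \emph{radial of order $d-2$} into the vanishing of jets. Write $a,b$ for the coefficients of $\mathrm{d}x,\mathrm{d}y$ in $\omega$. At $m_1$ the pencil of lines is cut out by $x\mathrm{d}y-y\mathrm{d}x$, and $m_1$ is radial of order $d-2$ precisely when each $\omega_i$, $1\leq i\leq d-2$, is a multiple of $x\mathrm{d}y-y\mathrm{d}x$, i.e. the contraction $i_R\omega=xa+yb$ (with $R=x\px+y\py$) has no homogeneous component in degrees $2,\dots,d-1$; the nonvanishing of the radial linear part gives $\lambda_{0,0}\neq0$. For the two vertices at infinity I would exploit the symmetry of the configuration. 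Writing $\Omega=A\mathrm{d}x+B\mathrm{d}y+C\mathrm{d}z$ for the homogeneous $1$-form defining $\F$ (so $a=A(x,y,1)$, $b=B(x,y,1)$), the \textsc{Euler} relation $xA+yB+zC=0$ gives $C(x,y,1)=-(xa+yb)$, and the identical computation performed in the charts $x=1$ and $y=1$, contracting with the radial fields $y\partial_y+z\partial_z$ and $x\partial_x+z\partial_z$, shows that radiality of order $d-2$ at $m_1,m_2,m_3$ amounts exactly to the three polynomials $C(x,y,1)$, $A(1,y,z)$ and $B(x,1,z)$ having vanishing homogeneous parts in degrees $2,\dots,d-1$. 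Equivalently, the linear involutions $[x:y:z]\mapsto[z:y:x]$ and $[x:y:z]\mapsto[x:z:y]$ preserve the triangle, the degree and the radial order while exchanging $m_1$ with $m_2$, resp. $m_3$, so the two conditions at infinity are just the condition at $m_1$ applied to the two transformed foliations.

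Finally I would feed these three sparsity conditions into $\omega_i=y\hat A_{i-1}\mathrm{d}x+x\hat B_{i-1}\mathrm{d}y$ and solve the resulting system. The condition at $m_1$ already yields $\hat B_{i-1}=-\hat A_{i-1}$ for $1\leq i\leq d-2$, so these parts collect into a radial term $G(x,y)(x\mathrm{d}y-y\mathrm{d}x)$ with $\deg G\leq d-3$; the conditions at $m_2$ and $m_3$ are what then cut $G$ down to the quadratic $\Lambda=\lambda_{0,0}+\lambda_{1,0}x+\lambda_{0,1}y+\lambda_{1,1}xy$ (annihilating its $x^2,y^2$ coefficients and all higher ones) and pin $\omega_{d-1},\omega_d$ to the sparse shape $y^{d-2}P\,\mathrm{d}x+x^{d-2}Q\,\mathrm{d}y$ with $P,Q$ as stated. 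The hard part will be this last bookkeeping: unlike the condition at $m_1$, the two conditions at infinity mix all the $\omega_i$ together because the chart change re-grades the form, so one must verify, uniformly in $d\geq3$ and in particular in the low cases $d=3,4$ where the degrees of the radial part overlap those of $\omega_{d-1},\omega_d$, that every monomial outside the listed ones is killed. Since all the conditions are linear in the coefficients of $\omega$, this is a finite (if intricate) linear-algebra verification, and invoking the $x\leftrightarrow y$ symmetry to deduce the shape of $Q$ from that of $P$ roughly halves the work.
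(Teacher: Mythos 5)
Your proof is correct and is essentially the paper's own: the paper's entire proof of this lemma is the remark that it ``can be proved in the same way as in \cite[Proposition~6.3]{BM18}'', i.e.\ exactly the computation you describe (impose radiality of order $d-2$ at the three vertices of the coordinate triangle and solve the resulting linear conditions on the coefficients of $\omega$), and your preliminary step --- invariance of the three sides via $\tau(\mathcal{F},m_i)+\tau(\mathcal{F},m_j)=2(d-1)>d$ and \cite[Proposition~2.2]{Bru15} --- is also how the paper argues in the analogous Lemma~\ref{lem:3-singularites-radiales-non-alignees}.

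One reassurance about the step you defer as ``hard bookkeeping'': in the homogeneous formulation you yourself set up it closes uniformly in $d\geq3$, with no separate treatment of $d=3,4$. Your three vanishing conditions, together with the singularity of the $m_i$ (which kills the degree-one parts; the degree-zero parts vanish by the Euler relation), say exactly that $A=x\,\alpha_{d}(y,z)+\alpha_{d+1}(y,z)$, $B=y\,\beta_{d}(x,z)+\beta_{d+1}(x,z)$ and $C=z\,\gamma_{d}(x,y)+\gamma_{d+1}(x,y)$, where each index is the degree of a homogeneous polynomial in the two displayed variables. Now $xA$, $yB$, $zC$ only produce monomials $x^{a}y^{b}z^{c}$ with $a\in\{1,2\}$, resp.\ $b\in\{1,2\}$, resp.\ $c\in\{1,2\}$; hence any coefficient of $\alpha_{d}$ (resp.\ $\alpha_{d+1}$) on a monomial $y^{j}z^{k}$ with $j\notin\{1,2\}$ and $k\notin\{1,2\}$ is the sole contribution to $x^{2}y^{j}z^{k}$ (resp.\ $xy^{j}z^{k}$) in the identity $xA+yB+zC=0$, and therefore vanishes. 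Since $d,d+1\notin\{1,2\}$, the surviving monomials of $\alpha_d$ are $yz^{d-1},y^{2}z^{d-2},y^{d-2}z^{2},y^{d-1}z$ and those of $\alpha_{d+1}$ are $yz^{d},y^{2}z^{d-1},y^{d-1}z^{2},y^{d}z$, i.e.\ $A(x,y,1)=-y\Lambda_a+y^{d-2}P$ with $\Lambda_a,P$ exactly as stated, and symmetrically $B(x,y,1)=x\Lambda_b+x^{d-2}Q$. Finally the remaining Euler relations (the coefficients of $xyz^{d}$, $x^{2}yz^{d-1}$, $xy^{2}z^{d-1}$, $x^{2}y^{2}z^{d-2}$) force $\Lambda_a=\Lambda_b$ for $d\geq5$, while for $d=3,4$ the overlap of monomials makes the stated normal form redundant and a common $\Lambda$ can simply be chosen; $\lambda_{0,0}\neq0$ is, as you say, the nonvanishing of the radial linear part at $m_1$.
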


\begin{proof}[\sl Proof of Theorem~\ref{thmalph:Hilbert5}]
The implication $\textit{(1)}\Rightarrow\textit{(2)}$ follows from the reduced convexity of the foliation $\Hilbertcinq$ and from the equality  $\mathcal{CS}(\Hilbertcinq)=\{-\frac{3}{2}\pm\frac{\sqrt{5}}{2}\}$ (Remark~\ref{rem:CS(Hilbertcinq)-CS(Fermat)}).

\noindent Let us show that \textit{(2)} implies \textit{(3)}. Assume that $\F$ is reduced convex and that $\mathcal{CS}(\F)=\{-\frac{3}{2}\pm\frac{\sqrt{5}}{2}\}.$ Let~$\elltext$~be an invariant line of  $\F.$ The homogeneous foliation  $\mathcal{H}_{\F}^{\ellindice}$ -- homogeneous degeneration of $\F$ along $\elltext$ -- being convex of degree $5$, it must be linearly conjugated to one of the fourteen homogeneous foliations given by Theorem~\ref{thmalph:class-homogene-convexe-5}. Moreover, let $m$ be a non radial singular point of $\F$ on $\elltext$; then we have $\mathrm{CS}(\F,\elltext,m)=-\frac{3}{2}\pm\frac{\sqrt{5}}{2}.$ According to Proposition~\ref{prop:F-dégénère-H}, the point $m$ is also a non radial singularity for $\mathcal{H}_{\F}^{\ellindice}$ and we have $\mathrm{CS}(\mathcal{H}_{\F}^{\ellindice},\elltext,m)=\mathrm{CS}(\F,\elltext,m)=-\frac{3}{2}\pm\frac{\sqrt{5}}{2}.$ It then follows from Table~\ref{tab:CS(lambda)} that $\mathcal{H}_{\F}^{\ellindice}$ is of type $2\cdot\mathrm{R}_1+2\cdot\mathrm{R}_3.$ This implies, according to assertion~\textit{(v)} of~Proposition~\ref{prop:F-dégénère-H}, that $\F$ has exactly four radial singularities on the line $\elltext;$ two of them $m_1,m_2$ are of order~$3$ and the other two are of order~$1.$ Let us consider another $\F$-invariant line $\elltext'\neq\elltext$ passing through $m_1,$ whose existence is guaranteed by \cite[Lemma~3.1]{BM19Z}. Then $\elltext'$ contains another radial singularity $m_3$ of order $3$ of $\F$ and two radial singularities of order $1$ of $\F.$ By \cite[Proposition~2.2]{Bru15}, the fact that $\tau(\F,m_2)+\tau(\F,m_3)=4+4>\deg\F$ ensures the $\F$-invariance of the line $\elltext''=(m_2m_3)$. Therefore $\elltext''$ in turn contains two radial singularities of order $1$ of $\F.$

\noindent Finally, let us prove that \textit{(3)} implies \textit{(1)}. Assume that \textit{(3)} holds. Then there is a homogeneous coordinate system $[x:y:z]\in\pp$ in which $m_1=[0:0:1],\,m_2=[1:0:0]$\, and \,$m_3=[0:1:0].$
Moreover, in this coordinate system the lines $x=0$, $y=0$, $z=0$ must be invariant by $\F$ and there exist $x_0,y_0,z_0,x_1,y_1,z_1\in \mathbb{C}^{*},$ $x_1\neq x_0,y_1\neq y_0,z_1\neq z_0,$ such that the points $m_4=[x_0:0:1],$\, $m_5=[1:y_0:0],$\, $m_6=[0:1:z_0],$\, $m_7=[x_1:0:1],$\, $m_8=[1:y_1:0],$\, $m_9=[0:1:z_1]$ are radial singularities of order~$1$ of $\F.$ Let us set $\xi=\frac{x_1}{x_0},\,\rho=\frac{y_1}{y_0},\,\sigma=\frac{z_1}{z_0},\,w_0=x_0y_0z_0$; then $w_0\in\mathbb{C}^*$, $\xi,\rho,\sigma\in\mathbb{C}\setminus\{0,1\}$ and, up to renumbering the $x_i,y_i,z_i,$ we can assume that $\xi,$ $\rho$ and $\sigma$ are all of modulus greater than or equal to~$1.$ Let $\omega$ be a $1$-form defining $\F$ in the affine chart $z=1.$ By~conjugating $\omega$ by the diagonal linear transformation $(x_0\hspace{0.2mm}x,\hspace{0.2mm}x_0y_0y)$, we reduce ourselves to $m_4=[1:0:1],$\, $m_5=[1:1:0],$\, $m_6=[0:1:w_0],$\, $m_7=[\xi:0:1],$\, $m_8=[1:\rho:0],$\, $m_9=[0:1:\sigma w_0].$ Since $m_1$, $m_2$ and $m_3$ are radial singularities of order $3$, $\omega$ can be written as in the expression given in Lemma~\ref{lem:3-singularites-radiales-d-2} in the case $d=5$. Then, as in the proof of \cite[Theorem~B]{BM19Z}, by writing explicitly that the points $m_j,4\leq j\leq9,$ are radial singularities of order $1$ of $\F$ we obtain that $w_0=\pm(\sqrt{5}-2)$ and

\begin{Small}
\begin{align*}
&
\xi=\rho=\sigma=\frac{3}{2}+\frac{\sqrt{5}}{2},&&
b_{1,0}=\frac{25+11\sqrt{5}}{2}a_{0,2},&&
\lambda_{0,0}=\frac{47+21\sqrt{5}}{2}a_{0,2},\\
&
a_{1,0}=(9+4\sqrt{5})(5w_0+5-2\sqrt{5})a_{0,2},&&
b_{0,1}=-\frac{(65+29\sqrt{5})(w_0+5-2\sqrt{5})}{2}a_{0,2},&&
\lambda_{1,0}=-\frac{65+29\sqrt{5}}{2}a_{0,2},\\
&
a_{0,1}=-\frac{25+11\sqrt{5}}{2}a_{0,2}w_0,&&
b_{1,1}=(5+2\sqrt{5})a_{0,2},&&
\lambda_{0,1}=-(85+38\sqrt{5})a_{0,2}w_0,\\
&
a_{1,1}=-\frac{5+\sqrt{5}}{2}a_{0,2},&&
b_{2,0}=-\frac{7+3\sqrt{5}}{2}a_{0,2},&&
\lambda_{1,1}=\frac{(47+21\sqrt{5})(5w_0+5-2\sqrt{5})}{2}a_{0,2}
\end{align*}
\end{Small}
\hspace{-1mm}with $a_{0,2}\neq0.$ Thus $\omega$ is of the form
\begin{align*}
&\omega=
\frac{a_{0,2}(47+21\sqrt{5})}{4}\Big(x\mathrm{d}y-y\mathrm{d}x\Big)\Big(2-\big(5-\sqrt{5}\big)x-w_0\big(5+\sqrt{5}\big)y+\big(10w_0+10-4\sqrt{5}\big)xy\Big)\\
&\hspace{0.72cm}+\frac{a_{0,2}}{2}y^3\Big(\big(9+4\sqrt{5}\big)\big(10w_0+10-4\sqrt{5}\big)x-w_0\big(25+11\sqrt{5}\big)y-\big(5+\sqrt{5}\big)xy+2y^2\Big)\mathrm{d}x\\
&\hspace{0.72cm}+\frac{a_{0,2}}{2}x^3\Big(\big(25+11\sqrt{5}\big)x-\big(65+29\sqrt{5}\big)\big(w_0+5-2\sqrt{5}\big)y-\big(7+3\sqrt{5}\big)x^2
+\big(10+4\sqrt{5}\big)xy\Big)\mathrm{d}y.
\end{align*}
The $1$-form $\omega$ is linearly conjugated to
\[
\omegahilbertcinq=\big(y^2-1\big)\big(y^2-(\sqrt{5}-2)^2\big)\big(y+\sqrt{5}x\big)\mathrm{d}x
                  -\big(x^2-1\big)\big(x^2-(\sqrt{5}-2)^2\big)\big(x+\sqrt{5}y\big)\mathrm{d}y.
\]
Indeed, if $w_0=\sqrt{5}-2$, resp. $w_0=2-\sqrt{5}$, then
\begin{align*}
&
\hspace{0.9cm}\omegahilbertcinq=\frac{32(3571-1597\sqrt{5})}{a_{0,2}}\varphi_{1}^*\omega,
\hspace{2.9mm}\quad\text{where}\hspace{1.5mm}
\varphi_1=\left(\frac{3+\sqrt{5}}{4}\big(x+1\big),-\frac{2+\sqrt{5}}{2}\big(y-1\big)\right),\\
&
\text{resp}.\hspace{1.5mm}
\omegahilbertcinq=\frac{32(64079-28657\sqrt{5})}{a_{0,2}}\varphi_{2}^*\omega,
\hspace{-1mm}\quad\text{where}\hspace{1.5mm}
\varphi_2=\left(\frac{2+\sqrt{5}}{2}\big(x+\sqrt{5}-2\big),-\frac{7+3\sqrt{5}}{4}\big(y+\sqrt{5}-2\big)\right).
\end{align*}
\end{proof}

\begin{proof}[\sl Proof of Theorem~\ref{thmalph:convexe-reduit-5}]
Let $\F$ be a reduced convex foliation of degree $5$ on $\pp.$ By assertion~\textit{(a)} of Proposition~\ref{propalph:CS(F)-CS(d)} and Corollary~\ref{cor:CS5} we have $\emptyset\neq\mathcal{CS}(\F)\subset\mathcal{HCS}_5=\left\{-4^{\pm 1},-\tfrac{3}{2}\pm\tfrac{\sqrt{5}}{2}\right\}.$ Hence, according to assertion~\textit{(b)} of Proposition~\ref{propalph:CS(F)-CS(d)}, one of the following three possibilities does occur:
\begin{itemize}
\item[(i)] $\mathcal{CS}(\F)=\{-4^{\pm 1}\};$

\item[(ii)] $\mathcal{CS}(\F)=\{-\frac{3}{2}\pm\frac{\sqrt{5}}{2}\};$

\item[(iii)] $\mathcal{CS}(\F)=\{-4^{\pm 1},-\frac{3}{2}\pm\frac{\sqrt{5}}{2}\}.$
\end{itemize}
In the case~(i) (resp.~(ii)) the foliation $\F$ is linearly conjugated to~$\F_0^5$ (resp.~$\Hilbertcinq$), thanks to Theorem~\ref{thmalph:Fermat} (resp.~Theorem~\ref{thmalph:Hilbert5}). To establish the theorem, it therefore suffices to exclude the possibility~(iii). Let us assume by contradiction that (iii) happens. Then $\F$ possesses two invariant lines $\elltext,\elltext'$ and two non radial singularities $m\in\elltext,m'\in\elltext'$ such that $\mathrm{CS}(\F,\elltext,m)=-\frac{1}{4}$ and $\mathrm{CS}(\F,\elltext',m')=-\frac{3}{2}\pm\frac{\sqrt{5}}{2}.$ According to Proposition~\ref{prop:F-dégénère-H}, the point $m$ (resp. $m'$) is also a non radial singularity for the homogeneous foliation $\mathcal{H}_{\F}^{\ellindice}$ (resp.~$\mathcal{H}_{\F}^{\ellindice'}$) and we have
\begin{align*}
&\mathrm{CS}(\mathcal{H}_{\F}^{\ellindice},\elltext,m)=\mathrm{CS}(\F,\elltext,m)=-\tfrac{1}{4}
&&\text{and}&&
\mathrm{CS}(\mathcal{H}_{\F}^{\ellindice'},\elltext',m')=\mathrm{CS}(\F,\elltext',m')=-\tfrac{3}{2}\pm\tfrac{\sqrt{5}}{2}.
\end{align*}
Moreover, as in the proof of Theorem~\ref{thmalph:Hilbert5}, each of the foliations $\mathcal{H}_{\F}^{\ellindice}$ and $\mathcal{H}_{\F}^{\ellindice'}$ is linearly conjugated to one of the fourteen homogeneous foliations given by Theorem~\ref{thmalph:class-homogene-convexe-5}. It then follows from Table~\ref{tab:CS(lambda)} that $\mathcal{H}_{\F}^{\ellindice}$ and $\mathcal{H}_{\F}^{\ellindice'}$ are respectively of types $2\cdot\mathrm{R}_4$ and $2\cdot\mathrm{R}_1+2\cdot\mathrm{R}_3.$ This implies, according to assertion~\textit{(v)} of~Proposition~\ref{prop:F-dégénère-H}, that $\F$ admits two radial singularities of order $4$ on the line $\elltext$ and four radial singularities on the line $\elltext',$ two of order~$1$ and two of order~$3.$ Let~$m_1$ (resp. $m_2$) be a radial singularity of order $4$ (resp. $3$) of $\F$ on the line $\elltext$ (resp. $\elltext'$). Since $\tau(\F,m_1)+\tau(\F,m_2)=5+4>\deg\F,$ the line $\elltext''=(m_1m_2)$ is invariant by $\F$ (\emph{cf.} \cite[Proposition~2.2]{Bru15}). The homogeneous foliation $\mathcal{H}_{\F}^{\ellindice''}$ being convex of degree $5$, it must therefore be of type $1\cdot\mathrm{R}_1+1\cdot\mathrm{R}_3+1\cdot\mathrm{R}_4$ so that it possesses a non radial singularity $m''$ on the line $\elltext''$ satisfying (see~Table~\ref{tab:CS(lambda)})
\begin{align*}
&\mathrm{CS}(\mathcal{H}_{\F}^{\ellindice''},\elltext'',m'')=\mathrm{CS}(\F,\elltext'',m'')=\lambda,
&&\text{with}&&
491\lambda^3+982\lambda^2+463\lambda+64=0
\end{align*}
which is impossible.
\end{proof}

\section{Conjectures}\label{sec:Conjectures}
\bigskip

\noindent The notion of convex reduced foliation has an interesting relation with certain line arrangements in $\pp.$ Indeed, according to \cite{PP16} we say that an arrangement $\mathcalsansmathptm{A}$ of $3d$ lines in $\pp$ has {\sl \textsc{Hirzebruch}'s property} if each line of $\mathcalsansmathptm{A}$ intersects the other lines of $\mathcalsansmathptm{A}$ in exactly $d+1$ points. The $3d$ invariant lines of a reduced convex foliation of degree $d$ on $\pp$ form a line arrangement which satisfies \textsc{Hirzebruch}'s property, thanks to \cite[Lemma~6.8]{BM18}~and~\cite[Lemma~3.1]{BM19Z}.
\noindent The expected conjectural picture for the reduced convex foliations on $\mathbb P^2_\C$ is the following: besides the \textsc{Fermat} foliations $\F_0^d$, with $\mathcal{CS}(\F_0^d) = \{(1 -d)^{\pm 1}\}$, there exist special reduced convex foliations only for $d = 4,5$ and $d = 7$, namely, the \textsc{Hesse} pencil in degree $4$, and the two \textsc{Hilbert} modular foliations in degree $5$ and $7$ presented in the Introduction, for which
\[\mathcal{CS}(\F_H^d)=\left\{\begin{array}{ll}\{-1\}& \text{for $d=4$,}\\[1mm] \{-\frac{3}{2}\pm\frac{\sqrt{5}}{2}\}& \text{for $d=5$,}\\[1mm] \{-\big(\frac{3}{4}\big)^{\pm1}\}& \text{for $d=7$.}
\end{array}\right.\]

\noindent {\it i.e.} we expect that there are no other convex reduced foliations on $\mathbb P^2_\C$ and for this reason we propose:

\begin{conj}\label{conj:CS=(1-d)pm1}
{\sl We have
\[\mathcal{HCS}_d=\left\{\begin{array}{ll}
\{(1-d)^{\pm 1}\} & \text{for $2\le d\neq 4,5,7,$}\\[1mm]
\{(1-d)^{\pm 1}\}\cup\mathcal{CS}(\F_H^d) & \text{for $d=4,5,7.$}
\end{array}\right.\]
}
\end{conj}

\noindent This conjecture, combined with Corollary~\ref{coralph:CS=(1-d)pm1}, would imply a negative answer in degree $d\neq7$ to \cite[Problem~9.1]{MP13} as we have already shown for $d\le 5$.
\smallskip

\noindent To every rational map $f\hspace{1mm}\colon\mathbb{P}^{1}_{\mathbb{C}}\rightarrow \mathbb{P}^{1}_{\mathbb{C}}$ and to every integer $d\geq 2,$ we associate respectively the following subsets of $\C\setminus\{0,1\}$:
\begin{itemize}
\item $\mathcal{M}(f)$ is, by definition, the set of $\mu\in\C\setminus\{0,1\}$ such that there is a fixed point $p$ of $f$ satisfying $f'(p)=\mu;$

\item $\mathcal{M}_d$ is defined as the set of $\mu\in\C\setminus\{0,1\}$ for which there exist critically fixed rational maps $f_1,f_2\hspace{1mm}\colon\mathbb{P}^{1}_{\mathbb{C}}\rightarrow\mathbb{P}^{1}_{\mathbb{C}}$ of degree $d$ having respective fixed points $p_1$ and $p_2$ such that $f_1'(p_1)=\mu$ and $f_2'(p_2)=\frac{\mu}{\mu-1}.$
\end{itemize}
\smallskip

\noindent The introduction of the sets $\mathcal{M}(f)$ and $\mathcal{M}_d$ is motivated by the following remark.
\begin{rem}
Let $\mathcal{H}$ be a homogeneous foliation of degree $d$ on $\pp.$ According to \cite[Section~2]{BM18} the point $s=[b:a:0]\in \elltext_{\infty}$ is a non-degenerate singularity of $\mathcal{H}$ if and only if the point $p=[a:b]\in\mathbb{P}^{1}_{\mathbb{C}}$  is fixed by $\Gunderline_{\mathcal{H}}$ with multiplier $\Gunderline_{\mathcal{H}}'(p)\neq1,$ in which case the \textsc{Camacho}-\textsc{Sad} index $\mathrm{CS}(\mathcal{H},\elltext_\infty,s)$ coincides with the index $\imath(\Gunderline_{\mathcal{H}},p)$ of $\Gunderline_{\mathcal{H}}$ at the fixed point $p$:
\[
\mathrm{CS}(\mathcal{H},\elltext_\infty,s)=\imath(\Gunderline_{\mathcal{H}},p):=\frac{1}{2\mathrm{i}\pi}\int_{|z-p|=
\varepsilon}\frac{\mathrm{d}z}{z-\Gunderline_{\mathcal{H}}(z)}=
\frac{1}{1-\Gunderline_{\mathcal{H}}'(p)}.
\]
Thus, the map $\mu\mapsto\frac{1}{1-\mu}$ sends $\mathcal M(\Gunderline_{\mathcal{H}})$ (resp. $\mathcal M_d$) bijectively onto $\mathcal{CS}(\mathcal{H})$ (resp. $\mathcal{HCS}_d$).
\end{rem}

\noindent Using the above definition of the sets $\mathcal{M}(f),$ Theorem~4.3 of E.~\textsc{Crane} \cite{Cra06} can be reformulated as follows:
\begin{thm}[\textsc{Crane}, \cite{Cra06}]
{\sl Let $f\hspace{1mm}\colon\mathbb{P}^{1}_{\mathbb{C}}\rightarrow \mathbb{P}^{1}_{\mathbb{C}}$ be a critically fixed rational map of degree  $d\geq2.$ Let $n\leq d$ denote the number of (distinct) critical points of $f.$ Then

\noindent\textbf{\textit{1.}} $f$ has exactly $d+1$ fixed points, of which $d+1-n$ are non-critical;

\noindent\textbf{\textit{2.}} the set $\mathcal{M}(f)$ is contained in $\C\setminus\left(\overline{\mathbb{D}}\left(0,1\right)\cup\mathbb{D}\left(1+\rho,\rho\right)\right),$ where $\overline{\mathbb{D}}(0,1)$ denotes the closed unit disk of $\C$ and $\mathbb{D}(1+\rho,\rho)\subset\C$ the open disk of radius $\rho=\frac{1}{d+n-2}$ and center $1+\rho.$ Moreover, $\mu\in\mathcal{M}(f)$ belongs to the boundary of the disk $\mathbb{D}(1+\rho,\rho)$ if and only if $n=d,$ in which case $\mu=\frac{d}{d-1}.$
}
\end{thm}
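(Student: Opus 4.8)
\emph{Proof proposal.} The plan is to use that a critically fixed map is postcritically finite, to combine a fixed-point count with the holomorphic index formula, and finally to read off everything through the M\"obius substitution $\mu\mapsto\frac{1}{1-\mu}$. Since every critical point of $f$ is fixed, the postcritical set of $f$ is exactly the finite set of its $n$ critical points, so $f$ is postcritically finite. By \textsc{Fatou}'s theorem the immediate basin of any attracting or parabolic cycle contains a critical point, and the boundary of any \textsc{Siegel} disk lies in the closure of the postcritical set; as this set is finite and each critical point is already a super-attracting fixed point, $f$ can carry no parabolic cycle, no \textsc{Siegel} disk and no \textsc{Cremer} point. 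Hence every fixed point of $f$ is either one of the $n$ super-attracting critical points (multiplier $0$) or a repelling fixed point (multiplier of modulus $>1$); in particular no fixed point has multiplier $1$, so all fixed points are simple.

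For assertion \textbf{1} I would then count: a degree-$d$ endomorphism of $\sph$ has $d+1$ fixed points counted with multiplicity, and since all of them are simple there are exactly $d+1$ distinct ones. The $n$ distinct critical points are $n$ of these, leaving $d+1-n$ non-critical fixed points. The bound $n\le d$ (rather than merely $n\le d+1$) falls out of the index computation below: if all $d+1$ fixed points were critical, the index sum would equal $d+1\neq1$.

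For assertion \textbf{2}, the holomorphic (\textsc{Lefschetz}) fixed point formula $\sum_{f(p)=p}\frac{1}{1-f'(p)}=1$ applies because every fixed point is simple; the $n$ super-attracting points each contribute $1$, so the non-critical fixed points $q_1,\dots,q_{d+1-n}$, with multipliers $\mu_j=f'(q_j)$, satisfy $\sum_{j}\frac{1}{1-\mu_j}=1-n$. Any $\mu\in\mathcal{M}(f)$ is nonzero and, by the dichotomy above, satisfies $|\mu|>1$, so $\mathcal{M}(f)\cap\overline{\mathbb{D}}(0,1)=\emptyset$. Writing $w_j=\frac{1}{1-\mu_j}$, a direct M\"obius computation shows $|\mu|>1\iff\mathrm{Re}\,\frac{1}{1-\mu}<\frac12$ and $\mu\in\mathbb{D}(1+\rho,\rho)\iff\mathrm{Re}\,\frac{1}{1-\mu}<-\frac{1}{2\rho}=-\frac{d+n-2}{2}$, the bounding circle through $\mu=1$ being sent to the vertical line $\mathrm{Re}\,w=-\frac{1}{2\rho}$.

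It then remains an averaging argument: each $w_j$ has $\mathrm{Re}\,w_j<\frac12$ while $\sum_j\mathrm{Re}\,w_j=1-n$, so for each $k$ one gets $\mathrm{Re}\,w_k=(1-n)-\sum_{j\neq k}\mathrm{Re}\,w_j>(1-n)-\frac{d-n}{2}=-\frac{d+n-2}{2}$ as soon as there is at least one other summand, i.e.\ whenever $n\le d-1$; this places $\mu_k$ strictly outside $\overline{\mathbb{D}}(1+\rho,\rho)$. When $n=d$ there is a single non-critical fixed point with $w_1=1-d$, whence $\mathrm{Re}\,w_1=-\frac{d+n-2}{2}$ exactly and $\mu_1=1-\frac{1}{1-d}=\frac{d}{d-1}$ lies on $\partial\mathbb{D}(1+\rho,\rho)$, which is the only way the boundary is attained. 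The main obstacle is the very first step: ruling out neutral (parabolic, \textsc{Siegel}, \textsc{Cremer}) fixed points is exactly where genuine complex-dynamics input is needed, since it is this finiteness-of-the-postcritical-set argument that simultaneously legitimises the index formula (all fixed points simple) and forces $|\mu|>1$ at every non-critical fixed point.
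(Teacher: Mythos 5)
This theorem is not proved in the paper at all: it is quoted, with attribution, as Theorem~4.3 of \cite{Cra06}, and the authors only restate it in foliation language (Corollary~\ref{cor:Crane}). So there is no internal proof to compare yours with; judged on its own merits, your proposal is correct, and it assembles exactly the ingredients one expects behind Crane's statement. The dynamical step is the genuinely substantive one, and your tools do suffice: since every critical point is a superattracting \emph{fixed} point, the postcritical set is the finite set of critical points; a fixed critical point cannot lie in the immediate basin of any other attracting or parabolic cycle (its orbit is a singleton, and a parabolic cycle is disjoint from its own basin), and finiteness of the postcritical set excludes Siegel disks (their boundaries are infinite) and Cremer points. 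Hence every non-critical fixed point is repelling, in particular simple, which legitimises both the count of $d+1$ distinct fixed points and the index formula $\sum_{f(p)=p}\frac{1}{1-f'(p)}=1$; from it you correctly extract $n\le d$ and $\sum_j w_j=1-n$ for the indices $w_j=\frac{1}{1-\mu_j}$ of the $d+1-n$ repelling fixed points. Your M\"obius dictionary ($|\mu|>1\iff\mathrm{Re}\,w<\tfrac12$, and $\mu\in\mathbb{D}(1+\rho,\rho)\iff\mathrm{Re}\,w<-\tfrac{d+n-2}{2}$) and the averaging argument, including the equality case $n=d$, $w_1=1-d$, $\mu_1=\tfrac{d}{d-1}$, are all verified. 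Two points where your write-up is compressed and should be made explicit: geometrically attracting but non-superattracting fixed points must be excluded by the same basin argument as parabolic ones (your conclusion lists only \og parabolic, Siegel, Cremer\fg, though the Fatou statement you invoke does cover the attracting case), and for Cremer points you need the standard theorem that they lie in the closure of (indeed are accumulation points of) the postcritical set, which you cited only for Siegel boundaries. Finally, it is worth noting that your half-plane reformulation of assertion \textbf{\textit{2}} is literally the form the paper uses: the inequality $-\tfrac12<-\mathrm{Re}\big(\mathrm{CS}(\mathcal H,\elltext_\infty,s)\big)\le\tfrac{d+n}{2}-1$ of Corollary~\ref{cor:Crane} is exactly your $\mathrm{Re}\,w$ picture read through $\mathrm{CS}=\frac{1}{1-\mu}$.
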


\noindent This theorem translates in terms of homogeneous foliations as follows:
\begin{cor}\label{cor:Crane}
{\sl
Let $\mathcal{H}$ be a homogeneous convex foliation of degree $d\geq2$ on $\pp.$ Let $n=\deg\mathcal{T}_{\mathcal{H}}$ denote the number of (distinct) radial singularities of $\mathcal{H}.$ Then

\noindent\textbf{\textit{1.}} $\mathcal{H}$ has exactly $d+1$ singularities on the line at infinity, of which $d+1-n$ are non radial;

\noindent\textbf{\textit{2.}} for any non radial singularity $s\in\elltext_\infty$ of $\mathcal{H},$ we have $-\frac{1}{2}<-\mathrm{Re}\big(\mathrm{CS}(\mathcal H,\elltext_\infty,s)\big)\leq \frac{d+n}{2}-1.$ This last inequality is an equality if and only if $n=d$, in which case $\mathrm{CS}(\mathcal H,\elltext_\infty,s)=1-d.$
}
\end{cor}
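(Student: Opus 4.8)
The plan is to deduce Corollary~\ref{cor:Crane} directly from Crane's Theorem by means of the dictionary recalled in the Remark preceding the statement, which identifies the singularities of $\mathcal{H}$ on $\elltext_\infty$ with the fixed points of the associated rational map $f:=\Gunderline_{\mathcal{H}}$: a singularity $s=[b:a:0]$ corresponds to the fixed point $p=[a:b]$, and when $s$ is non-degenerate one has $\mathrm{CS}(\mathcal{H},\elltext_\infty,s)=\frac{1}{1-\mu}$ with $\mu=f'(p)$. Since $\mathcal{H}$ is convex, $f$ is critically fixed, and by Remark~\ref{rem:radiale-equivaut-CS=1} the radial singularities are exactly those with $\mathrm{CS}=1$, hence (through the dictionary) the fixed points of multiplier $0$, i.e. the fixed critical points of $f$. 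Thus the $n=\deg\mathcal{T}_{\mathcal{H}}$ distinct radial singularities correspond bijectively to the $n$ distinct critical points of $f$, and this identification is the backbone of everything that follows.

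First I would settle part \textit{1}. Part \textit{1} of Crane's Theorem gives exactly $d+1$ fixed points of $f$, of which $d+1-n$ are non-critical; transcribing through the dictionary, $\mathcal{H}$ has exactly $d+1$ singularities on $\elltext_\infty$, the $n$ critical ones being radial and the remaining $d+1-n$ non-radial. One should note in passing that these $d+1-n$ non-critical fixed points really do yield non-degenerate (hence genuine) singularities: no fixed point is parabolic, so their multipliers lie in $\mathcal{M}(f)$, which by part \textit{2} of Crane's Theorem avoids $\overline{\mathbb{D}}(0,1)$ and in particular differs from $1$.

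The substance of the argument is part \textit{2}, which amounts to transporting the two exclusion regions of Crane's Theorem through the Möbius transformation $\mu\mapsto\lambda:=\frac{1}{1-\mu}$, with inverse $\mu=\frac{\lambda-1}{\lambda}$. I would compute the images of the two bounding circles. The condition $|\mu|=1$ reads $|\lambda-1|=|\lambda|$, so the unit circle maps to the vertical line $\mathrm{Re}(\lambda)=\frac12$; since $\mu=2\mapsto\lambda=-1$, the region $|\mu|>1$ maps to the open half-plane $\mathrm{Re}(\lambda)<\frac12$, giving $-\frac12<-\mathrm{Re}(\lambda)$. The circle $\partial\mathbb{D}(1+\rho,\rho)$ with $\rho=\frac{1}{d+n-2}$ passes through $\mu=1\mapsto\infty$, hence maps to a line; evaluating at $\mu=1+2\rho\mapsto\lambda=-\frac{1}{2\rho}=1-\frac{d+n}{2}$ and at $\mu=1+\rho+\mathrm{i}\rho$ (whose image has the same real part) identifies this line as $\mathrm{Re}(\lambda)=1-\frac{d+n}{2}$. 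As the center $\mu=1+\rho\mapsto\lambda=2-d-n$ lies strictly to its left, the open disk maps to $\mathrm{Re}(\lambda)<1-\frac{d+n}{2}$, so $\mu$ lying outside it forces $\mathrm{Re}(\lambda)\geq 1-\frac{d+n}{2}$, that is $-\mathrm{Re}(\lambda)\leq\frac{d+n}{2}-1$. Combining the two bounds yields the asserted inequality.

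For the equality statement I would invoke the final clause of Crane's Theorem: $\mu\in\mathcal{M}(f)$ meets $\partial\mathbb{D}(1+\rho,\rho)$—equivalently $\mathrm{Re}(\lambda)=1-\frac{d+n}{2}$, i.e. equality in the upper bound—precisely when $n=d$, and then $\mu=\frac{d}{d-1}$, whence $\mathrm{CS}=\lambda=\frac{1}{1-\frac{d}{d-1}}=1-d$. Since the corollary is a faithful translation of Crane's statement, there is no real obstacle here; the only point demanding care is the Möbius bookkeeping of the preceding paragraph—tracking which side of each circle maps to which half-plane, distinguishing open from closed, and handling the boundary point $\mu=1\mapsto\infty$ correctly—after which the two inequalities and the equality case fall out mechanically.
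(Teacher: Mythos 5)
Your proposal is correct and follows essentially the same route as the paper, which presents this corollary as a direct translation of Crane's Theorem via the dictionary $\mathrm{CS}(\mathcal H,\elltext_\infty,s)=\frac{1}{1-\Gunderline_{\mathcal{H}}'(p)}$ and the identification of radial singularities with fixed critical points; the paper simply leaves the Möbius-transformation bookkeeping implicit, whereas you carry it out explicitly (and correctly: the unit circle goes to $\mathrm{Re}(\lambda)=\frac12$, the circle $\partial\mathbb{D}(1+\rho,\rho)$ to $\mathrm{Re}(\lambda)=1-\frac{d+n}{2}$, with the right sides and the right open/closed distinctions).
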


\noindent With the notations of Corollary~\ref{cor:Crane}, since $n\leq d$ we have in particular $-\frac{1}{2}<-\mathrm{Re}\big(\mathrm{CS}(\mathcal H,\elltext_\infty,s)\big)\leq d-1$. According to Remark~\ref{rem:CS(Hilbertcinq)-CS(Fermat)}, the value $d-1$ is attained by $(\mathcal{H},s)\mapsto-\mathrm{Re}\big(\mathrm{CS}(\mathcal H,\elltext_\infty,s)\big)$. However, after having checked many examples, we think that the lower bound $-\frac{1}{2}$ of $-\mathrm{Re}\big(\mathrm{CS}(\mathcal H,\elltext_\infty,s)\big)$ is not optimal and we propose the following conjecture with the value $\frac{1}{d-1}$ which is also attained by $(\mathcal{H},s)\mapsto-\mathrm{Re}\big(\mathrm{CS}(\mathcal H,\elltext_\infty,s)\big)$  (Remark~\ref{rem:CS(Hilbertcinq)-CS(Fermat)}).
\begin{conj}\label{conj:-Re(CS)}
{\sl
If $\mathcal{H}$ is a homogeneous convex foliation of degree $d\geq2$ on $\pp,$ then for any non radial singularity $s\in\elltext_\infty$ of $\mathcal{H}$ we have $\frac{1}{d-1}\leq-\mathrm{Re}\big(\mathrm{CS}(\mathcal H,\elltext_\infty,s)\big).$ Alternatively, if $f\hspace{1mm}\colon\mathbb{P}^{1}_{\mathbb{C}}\rightarrow \mathbb{P}^{1}_{\mathbb{C}}$ is a critically fixed rational map of degree $d\geq2,$ then the set $\mathcal{M}(f)$ is contained in the closed disk $\overline{\mathbb{D}}\left(\frac{d+1}{2},\frac{d-1}{2}\right)\subset\C$ of center $\frac{d+1}{2}$ and radius $\frac{d-1}{2}.$
}
\end{conj}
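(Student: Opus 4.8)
\noindent I would approach the conjecture through the rational-map formulation, first recording that the two statements are strictly equivalent. Writing $\mathrm{CS}=\frac{1}{1-\mu}$ as in the remark preceding the statement, the \textsc{M\"obius} transformation $\mu\mapsto\frac{1}{1-\mu}$ carries the closed disk $\overline{\mathbb{D}}\left(\frac{d+1}{2},\frac{d-1}{2}\right)$, whose boundary circle has $[1,d]$ as a diameter, onto the closed half-plane $\{\mathrm{Re}(\mathrm{CS})\le-\frac{1}{d-1}\}$: indeed $\mu=1$ and $\mu=d$ go respectively to $\infty$ and $-\frac{1}{d-1}$, while the center $\frac{d+1}{2}$ goes to $-\frac{2}{d-1}$, so the interior maps to $\{\mathrm{Re}(\mathrm{CS})<-\frac{1}{d-1}\}$. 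Equivalently, $\mu\in\overline{\mathbb{D}}\left(\frac{d+1}{2},\frac{d-1}{2}\right)$ if and only if $|\mu|^2+d\le(d+1)\mathrm{Re}(\mu)$, if and only if $-\mathrm{Re}(\mathrm{CS})\ge\frac{1}{d-1}$. Hence it suffices to bound, for $f=\Gunderline_{\mathcal{H}}$ a critically fixed map of degree $d$, the multiplier at each of its non-critical fixed points.

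\noindent The next step is the holomorphic fixed-point (\textsc{Camacho}-\textsc{Sad}) formula $\sum_{f(p)=p}\frac{1}{1-\mu_p}=1$. By Crane's Theorem the $n$ distinct critical points are simple fixed points with multiplier $0$, each of index $1$ (note $n\ge2$, since one critical point alone has order at most $d-1<2d-2$); therefore the $d+1-n$ non-critical fixed points satisfy $\sum_j\frac{1}{1-\mu_j}=1-n$, that is $\sum_j\big(-\mathrm{Re}(\mathrm{CS}_j)\big)=n-1$. As there are $d+1-n$ summands, the average of $-\mathrm{Re}(\mathrm{CS}_j)$ equals $\frac{n-1}{d+1-n}$, and $\frac{n-1}{d+1-n}\ge\frac{1}{d-1}$ precisely because $(n-1)(d-1)-(d+1-n)=d(n-2)\ge0$ for $n\ge2$, with equality exactly when $n=2$. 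This proves the conjecture \emph{on average}, recovers the sharp value $\frac{1}{d-1}$ realized by the \textsc{Fermat} foliation $\F_0^d$ (type $2\cdot\mathrm{R}_{d-1}$, $n=2$), and explains why $\frac{1}{d-1}$ is the expected constant.

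\noindent The main obstacle is to promote this averaged inequality to the claimed \emph{pointwise} bound $-\mathrm{Re}(\mathrm{CS}_j)\ge\frac{1}{d-1}$ for each $j$: the index identity only constrains the sum, and the termwise information coming from Crane's exclusion of the closed unit disk gives merely $-\mathrm{Re}(\mathrm{CS}_j)>-\frac12$, far too weak to prevent the weight $n-1$ from concentrating. To obtain a genuine pointwise estimate I would use the rigid structure of critically fixed maps through the planar (\textsc{Tischler}) graph models underlying the classification of \cite{CGNPP15}: each superattracting fixed point $c_i$ of local degree $m_i+1$ bounds a \textsc{Jordan} immediate basin on which $f$ is conformally conjugate to $z\mapsto z^{\,m_i+1}$, and every non-critical fixed point $p$ occurs as a repelling boundary fixed point shared by a definite set of such basins. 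The goal would be to control $\mu_p$ by the ramification data $(m_i)$ of the incident basins and thereby force $|\mu_p|^2+d\le(d+1)\mathrm{Re}(\mu_p)$.

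\noindent The intrinsic difficulty is that the multiplier at a repelling fixed point is an analytic modulus rather than a combinatorial invariant, so no purely graph-theoretic computation can pin down $\mu_p$; what one can reasonably hope to transport is an \emph{a priori} geometric estimate, comparing the local dynamics at $p$ with the circle maps $z\mapsto z^{\,m_i+1}$ via the hyperbolic metrics of the incident basins, in the spirit of the \textsc{Yoccoz} inequality for repelling periodic points. As evidence, the conjecture is verified in degree $5$ by inspection of Table~\ref{tab:CS(lambda)} (each non radial index $\lambda$ listed there satisfies $-\mathrm{Re}(\lambda)\ge\frac14$), and it holds for all $d$ along the \textsc{Fermat} family, where it is sharp; a complete proof would amount to showing that no realizable combinatorial type distributes the total $n-1$ so as to push a single $-\mathrm{Re}(\mathrm{CS}_j)$ below $\frac{1}{d-1}$.
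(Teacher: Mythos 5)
You should first be clear about the status of this statement in the paper: it is a \emph{conjecture}, and the authors give no proof of it --- they only report that it can be verified for $d\in\{2,3,4,5\}$ by inspecting the explicit normal forms (e.g.\ Table~\ref{tab:CS(lambda)} for $d=5$) and for $d=6$ by machine computation. So there is no ``paper proof'' to match, and any complete argument would be new mathematics. What you do establish is correct and goes beyond what the paper records: the equivalence of the two formulations is right (the M\"obius map $\mu\mapsto\frac{1}{1-\mu}$ carries $\overline{\mathbb{D}}\left(\frac{d+1}{2},\frac{d-1}{2}\right)$ onto $\{\mathrm{Re}(\mathrm{CS})\leq-\frac{1}{d-1}\}\cup\{\infty\}$, and non-radial singularities of $\mathcal{H}$ correspond exactly to non-critical fixed points of $\Gunderline_{\mathcal{H}}$, since radial $\Leftrightarrow$ $\mathrm{CS}=1$ $\Leftrightarrow$ $\mu=0$); and the averaged inequality is a clean, correct consequence of the holomorphic fixed point formula: the $n$ superattracting fixed points each contribute index $1$, so $\sum_j\big(-\mathrm{Re}(\mathrm{CS}_j)\big)=n-1$ over the $d+1-n$ non-critical fixed points, and $\frac{n-1}{d+1-n}\geq\frac{1}{d-1}$ because $(n-1)(d-1)-(d+1-n)=d(n-2)\geq0$ with $n\geq2$.

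The gap, however, is precisely the conjecture itself. An average bound says nothing about the distribution: nothing in your argument excludes a critically fixed map in which one non-critical fixed point has $-\mathrm{Re}(\mathrm{CS}_j)$ barely above Crane's lower bound $-\frac{1}{2}$ while the others overshoot to preserve the sum $n-1$. Your last two paragraphs acknowledge this and propose a strategy (Tischler graphs, hyperbolic metrics of immediate basins, a Yoccoz-type inequality at the repelling fixed points), but no inequality on an individual multiplier is ever derived from it --- and, as you say yourself, the multiplier at a repelling fixed point is an analytic modulus not pinned down by the combinatorial model, so the strategy as stated cannot close the gap without a genuinely new estimate. In sum: your proposal proves a strictly weaker statement (the bound on average, sharp exactly when $n=2$, i.e.\ for type $2\cdot\mathrm{R}_{d-1}$), and it leaves the pointwise claim --- the actual content of the conjecture --- open, exactly where the paper leaves it; it should not be presented as a proof, though the averaged identity would make a worthwhile remark alongside the conjecture.
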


\begin{figure}[h]
\begin{center}
\includegraphics[width=7cm]{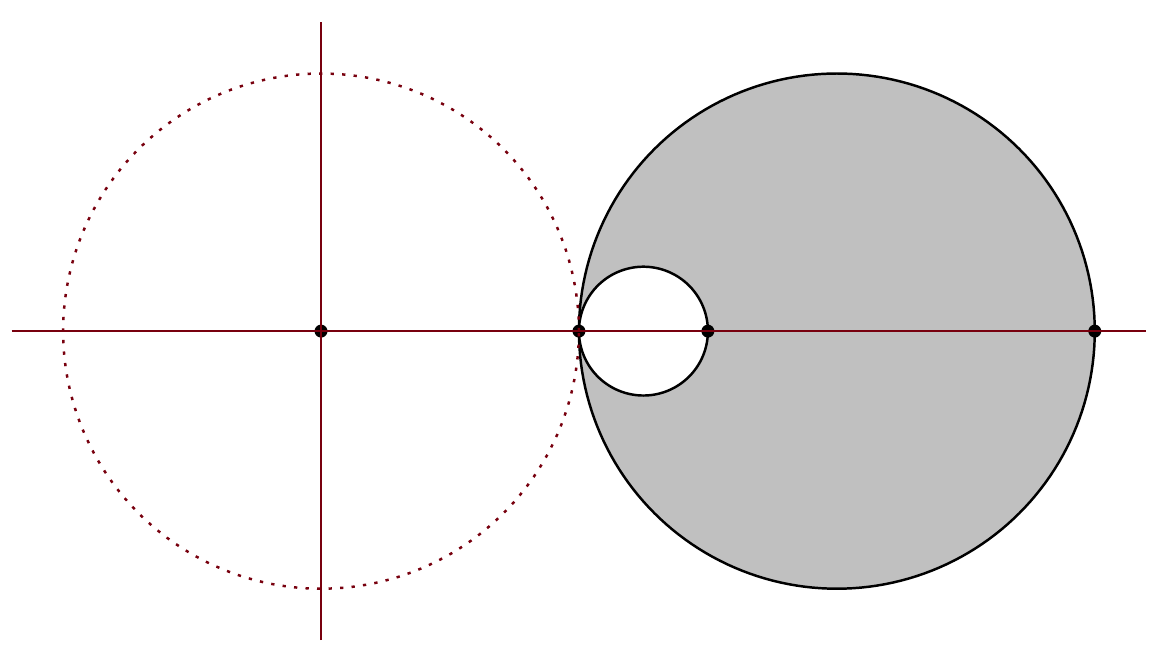}
\end{center}
\caption{The set $\mathcal{M}(f)$ is conjectured to be contained in the grey region for any critically fixed rational map $f\hspace{1mm}\colon\mathbb{P}^{1}_{\mathbb{C}}\rightarrow \mathbb{P}^{1}_{\mathbb{C}}$ of degree $d.$ It is known that it is contained in the exterior of the union of the dashed circle and the inner white disk. The black points from left to right are $0,1,\frac{d}{d-1}$ and $d$. Conjecture 1 for $2\le d\neq 4,5,7$ is equivalent to the statement $\mathcal M_d=\{\frac{d}{d-1},d\}$.}
\end{figure}

\noindent This conjecture is also motivated by the following remark:
\begin{rem}
If Conjecture~\ref{conj:-Re(CS)} is true, Conjecture~\ref{conj:CS=(1-d)pm1} claims that in degree $2\leq d\neq4,5,7$ the set $\mathcal{HCS}_{d}$ consists of the extreme values of $-\mathrm{Re}\big(\mathrm{CS}(\mathcal H,\elltext_\infty,s)\big)$ when $\mathcal H$ runs through the set of homogeneous convex foliations of degree $d$ on $\pp$ and $s$ runs through the set of non radial singularities of~$\mathcal H$ on the line $\elltext_\infty.$
\end{rem}

\noindent Elementary computations, using the normal forms of homogeneous convex foliations of degree $d\in\{2,3,4,5\}$ on $\pp$ presented in \cite[Proposition 7.4]{FP15}, \cite[Corollary~C]{Bed17}, \cite[Theorem~A]{BM19Z} and in Theorem~\ref{thmalph:class-homogene-convexe-5}, show the validity of Conjecture~\ref{conj:CS=(1-d)pm1} for $d\in\{2,3\}$ and Conjecture~\ref{conj:-Re(CS)} for $d\in\{2,3,4,5\}$. Moreover, very long computations carried out with \texttt{Maple} by the first author give $49$ normal forms for homogeneous convex foliations of degree $6$ on~$\pp$ and allow to verify the validity of Conjectures~\ref{conj:CS=(1-d)pm1} and \ref{conj:-Re(CS)} for $d=6.$ The more difficult case $d=7$ is out of reach at this moment.


\end{document}